\theoremstyle{plain}
\newtheorem{prop}{Proposition}
\newtheorem{thm}[prop]{Theorem}
\newtheorem*{conju}{Conjecture}
\newtheorem{cor}[prop]{Corollary}
\newtheorem{lemma}[prop]{Lemma}
\theoremstyle{definition}
\newtheorem{example}[prop]{Example}
\theoremstyle{remark}
\newtheorem{remark}[prop]{Remark}
\newcommand{\Diff}{\operatorname{Diff}}
\newcommand{\paper}{_P}
\newcommand{\spine}{_\Sigma}
\newcommand{\SO}{\operatorname{SO}}
\newcommand{\CC}{{\mathbb C}}
\newcommand{\DD}{{\mathbb D}}
\newcommand{\NN}{{\mathbb N}}
\newcommand{\QQ}{{\mathbb Q}}
\newcommand{\ZZ}{{\mathbb Z}}
\newcommand{\uU}{{\mathcal U}}
\newcommand{\p}{\partial}
\newcommand{\defin}[1]{\textbf{#1}}
\newcommand{\CP}{\mathbb{CP}}
\definecolor{Chris}{rgb}{0.5,0,1}
\definecolor{Sam}{rgb}{0,1,0}
\numberwithin{equation}{section}
\title{Spine removal surgery and the geography of symplectic fillings}
\author{Samuel Lisi}
\address{University of Mississippi\\
Department of Mathematics\\
USA }
\email{stlisi@olemiss.edu}
\author{Chris Wendl}
\address{Institut f\"ur Mathematik \\
Humboldt-Universit\"at zu Berlin \\
Germany}
\email{wendl@math.hu-berlin.de}
\thanks{S.L.~was partially supported during this project by the ERC Starting Grant of Fr\'ed\'eric Bourgeois
StG-239781-ContactMath, Vincent Colin's ERC Grant geodycon, and by a University
of Mississippi CLA SRG. 
C.W.~was partially supported by a Royal Society University
Research Fellowship, EPSRC grant EP/K011588/1, and the ERC Grant
TRANSHOLOMORPHIC}
\begin{document}

\maketitle

\begin{abstract}
We prove that for any contact $3$-manifold supported by a spinal open book
decomposition with planar pages, there is a universal bound on the Euler
characteristic and signature of its minimal symplectic fillings.  The proof
is an application of the spine removal surgery operation recently
introduced in joint work of the authors with Van Horn-Morris
\cite{LisiVanhornWendl1}.
\end{abstract}


\section{Introduction}

It was conjectured in 2002 by Stipsicz \cite{Stipsicz:gaugeStein} that
every closed contact $3$-manifold admits a universal bound on the signatures
and Euler characteristics of its possible Stein fillings.  Counterexamples
to this conjecture were found a few years ago by Baykur and Van Horn-Morris
\cite{BaykurVanhorn:large}, but it was also shown by Kaloti \cite{Kaloti:Stein}
that the conjecture holds for the special class of \emph{planar} contact
manifolds, i.e.~those that are supported (in the sense of Giroux \cite{Giroux:ICM})
by a planar open book decomposition.  Kaloti's proof was based on relations
in the mapping class groups of compact planar surfaces with boundary,
using a theorem of the second author \cite{Wendl:fillable} that describes
fillings of planar contact manifolds in terms of Lefschetz fibrations,
together with methods of Plamenevskaya and Van Horn-Morris \cite{PlamenevskayaVanHorn}
to achieve bounds on numbers of positive factorizations.
In this note, we shall use completely different methods to prove the
following generalization of Kaloti's result:

\begin{thm}
\label{thm:finCobordism}
Suppose $(M,\xi)$ is a closed contact $3$-manifold with 
a supporting spinal open book whose pages are planar.
Then there exists a finite list of $4$-manifolds
$W_1',\ldots,W_n'$ with boundary, and a compact $4$-manifold $X$ that
has $-M$ as a boundary component, such that if
$(W,\omega)$ is any minimal strong filling of $(M,\xi)$,
then $W \cup_M X \cong W_j'$ for some $j \in \{1,\ldots,n\}$.
\end{thm}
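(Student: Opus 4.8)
The plan is to apply the spine removal surgery of \cite{LisiVanhornWendl1}, which takes a strong filling $(W,\omega)$ of $(M,\xi)$ together with a supporting spinal open book and produces, after attaching a model piece $X$ along $-M$, a closed symplectic $4$-manifold — or more precisely a symplectic filling of the contact boundary that results from collapsing the spine, which for planar pages will be a disjoint union of lens spaces (or $S^1 \times S^2$'s) carrying their canonical fillable contact structures. The first step is therefore to fix the combinatorial data of the spinal open book and construct from it the compact cobordism $X$ with $\partial X = (-M) \sqcup M'$, where $(M',\xi')$ is the post-surgery contact manifold; since the pages are planar, $M'$ is a disjoint union of connected sums of $S^1 \times S^2$ and lens spaces, each with a unique (up to the relevant equivalence) minimal strong filling by a result in the planar/Lefschetz-fibration literature. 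Then for any minimal strong filling $(W,\omega)$, the glued manifold $\widehat W := W \cup_M X$ is a minimal strong filling of $(M',\xi')$.

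The second step is to bound the topology of $\widehat W$. Here I would invoke the classification of symplectic fillings of the relevant ``simple'' contact manifolds: fillings of $(S^3,\xi\std)$, of $S^1 \times S^2$, and of lens spaces $L(p,q)$ with their standard contact structures are known (McDuff, Lisca, and others) to fall — up to blow-up, which minimality rules out, and up to diffeomorphism — into a \emph{finite} list determined by $(M',\xi')$. Concatenating over the finitely many components of $M'$, there is a finite list $W_1',\ldots,W_n'$ of $4$-manifolds with boundary such that every minimal strong filling $\widehat W$ of $(M',\xi')$ is diffeomorphic (rel boundary, compatibly with the identification $\partial\widehat W \cong M'$) to some $W_j'$. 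Combining with Step 1 gives exactly the statement: $W \cup_M X \cong W_j'$ for some $j$.

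The main obstacle I expect is \emph{not} the topological bookkeeping but ensuring that the spine removal surgery of \cite{LisiVanhornWendl1} applies to an \emph{arbitrary} minimal strong filling, uniformly, with an $X$ that does not depend on $W$. The surgery in that paper is performed in a neighborhood of the binding/spine, which is governed by the contact data near $\partial W$ and by a choice of symplectic structure on the spine region; the point to verify carefully is that this neighborhood — and hence $X$ — can be chosen once and for all from $(M,\xi)$ and the spinal open book, independently of the filling, and that after the surgery the resulting symplectic manifold is genuinely a strong (indeed minimal, after absorbing any exceptional spheres into the finite list) filling of the collapsed boundary. A secondary subtlety is that the planar-page hypothesis must be used \emph{twice}: once to guarantee that collapsing the pages produces only lens spaces and $S^1\times S^2$ summands (so that a finite classification of fillings is available — higher-genus pages would collapse to fillings with no such finiteness), and once implicitly through \cite{Wendl:fillable}-type input if it is needed to control the symplectic structure on $X$ itself. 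Granting the results of \cite{LisiVanhornWendl1} as stated, these are the points where the argument must be pinned down, but each is expected to be a direct consequence of the construction there.
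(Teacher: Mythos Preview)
Your proposal has a genuine gap at its starting point: the positive boundary $\p_+ X$ of the spine removal cobordism is \emph{not} a convex contact-type boundary. As constructed in \cite{LisiVanhornWendl1} and recalled in the paper, when one removes all spine components the result is a disjoint union of $S^2$-bundles over $S^1$ in which the sphere fibers are \emph{symplectic} submanifolds of $(X,\omega_X)$. This is incompatible with convexity: if there were an outward Liouville vector field, the restriction of $\omega_X$ to $\p_+ X$ would be exact, but then $\int_{S^2}\omega_X = 0$ on each fiber, contradicting that the fibers are symplectic. So there is no contact structure $\xi'$ on $M' = \p_+ X$ for which $Z = W \cup_M X$ is a strong filling, and the McDuff--Lisca classification of fillings of lens spaces or of $(S^1\times S^2,\xi\std)$ simply does not apply. (Your mention of lens spaces also suggests a confusion about the operation: capping the boundary circles of a genus-zero page by disks yields a sphere, so each component of $\p_+ X$ is an $S^2$-bundle over $S^1$, not a lens space.)

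What the symplectic $S^2$-fibers at $\p_+ X$ \emph{do} give you is a seed for a McDuff-style moduli space of $J$-holomorphic spheres, and this is what the paper uses: one obtains a Lefschetz fibration $\Pi : Z \to \Sigma_0$ with spherical fibers over some compact surface with boundary, hence $Z \cong (\Sigma_0 \times S^2)\,\#\, m\,\overline{\CC P}^2$. But this alone carries \emph{no a priori bound} on the genus of $\Sigma_0$ or on $m$; both depend on the filling~$W$. The entire substance of the proof is to bound these two quantities, and your plan has nothing to put in its place. The paper bounds them by exploiting the co-cores $C_i = \Sigma_i \times \{0\}$ of the spine-removal handles, which sit in $Z$ as $J$-holomorphic multisections of~$\Pi$: Riemann--Hurwitz for $\Pi|_{C_i}$ bounds $\chi(\Sigma_0)$, while minimality of $W$ forces every exceptional sphere in $Z$ to meet some $C_i$, and a blow-down plus relative-homology argument then bounds~$m$. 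None of this is available through a black-box filling classification, because after spine removal there is no contact filling problem left to classify.
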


\begin{cor}
Under the hypotheses of Theorem~\ref{thm:finCobordism},
there exists a number $N \in \NN$ such that for all
minimal strong fillings $(W,\omega)$ of $(M,\xi)$,
$$
|\chi(W)| \le N, \quad\text{ and }\quad |\sigma(W)| \le N.
$$
\end{cor}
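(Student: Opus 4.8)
The plan is to deduce the Corollary directly from Theorem~\ref{thm:finCobordism}, using only the standard additivity properties of the Euler characteristic and of the signature under gluing of $4$--manifolds along a closed $3$--manifold. Fix the finite list $W_1',\ldots,W_n'$ and the compact $4$--manifold $X$ furnished by the theorem, and let $(W,\omega)$ be an arbitrary minimal strong filling of $(M,\xi)$, so that $\partial W = M$ and $W \cup_M X \cong W_j'$ for some $j \in \{1,\ldots,n\}$.

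First I would record the two gluing formulas. Since $M$ is a closed $3$--manifold we have $\chi(M) = 0$, so the inclusion--exclusion formula for the Euler characteristic gives
$$
\chi(W \cup_M X) = \chi(W) + \chi(X) - \chi(M) = \chi(W) + \chi(X).
$$
For the signature, the gluing takes place along the entire boundary component $M$ of $W$ (identified with the boundary component $-M$ of $X$), so Novikov additivity applies and yields
$$
\sigma(W \cup_M X) = \sigma(W) + \sigma(X).
$$
Both $\chi$ and $\sigma$ are invariant under the diffeomorphism $W \cup_M X \cong W_j'$, so combining these identities gives
$$
\chi(W) = \chi(W_j') - \chi(X), \qquad \sigma(W) = \sigma(W_j') - \sigma(X).
$$

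Consequently $\chi(W)$ lies in the finite set $\{\chi(W_i') - \chi(X)\}_{i=1}^n$ and $\sigma(W)$ lies in the finite set $\{\sigma(W_i') - \sigma(X)\}_{i=1}^n$, and these sets do not depend on the choice of $(W,\omega)$. Taking
$$
N := \max_{1 \le i \le n} \bigl( |\chi(W_i') - \chi(X)| + |\sigma(W_i') - \sigma(X)| \bigr)
$$
then yields the desired uniform bound. There is essentially no obstacle once Theorem~\ref{thm:finCobordism} is available; the only subtleties are bookkeeping ones, namely checking that the gluing region $M$ is closed (so that $\chi(M) = 0$ and Novikov additivity are legitimately invoked) and that orientations are carried along consistently so the signature identity holds with the stated signs. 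All of the genuine difficulty is concentrated in Theorem~\ref{thm:finCobordism} and hence in the spine removal surgery input from \cite{LisiVanhornWendl1}.
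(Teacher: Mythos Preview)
Your proof is correct and follows essentially the same approach as the paper: both use $\chi(M)=0$ together with inclusion--exclusion for the Euler characteristic, and Novikov additivity for the signature, to express $\chi(W)$ and $\sigma(W)$ in terms of the finite data $W_1',\ldots,W_n'$ and~$X$. The paper additionally remarks that the signature bound can alternatively be seen via the injection $H_2(W;\QQ)\hookrightarrow H_2(W_j';\QQ)$ and the resulting inequalities $b_2^\pm(W)\le b_2^\pm(W_j')$, but this is offered only as a more elementary variant of the same idea.
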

\begin{proof}
If $W \cup_M X = W_j'$, then $\chi(W) = \chi(W_j') - \chi(X)$
since $M$ has vanishing Euler characteristic, so this gives the
bound on~$\chi(W)$.  A bound on $\sigma(W)$ follows immediately
from Novikov additivity; in more elementary terms, 
one can also see it from the
observation that the inclusion $W \hookrightarrow W_j'$ induces
an injection of any positive/negative-definite subspace of
$H_2(W;\QQ)$ into $H_2(W_j';\QQ)$, implying
$b_2^\pm(W) \le b_2^\pm(W_j')$.
\end{proof}

Spinal open book decompositions were recently introduced in joint work of
the authors with Jeremy Van Horn-Morris \cite{LisiVanhornWendl1}. 
The main motivation behind them is that they are  
the natural structure one obtains on the
boundary of any Lefschetz fibration whose fibers and base are both compact
oriented surfaces with nonempty boundary.  As with ordinary open books,
a spinal open book on $(M,\xi)$ with pages of genus zero gives rise to a 
well-behaved family of pseudoholomorphic curves in the symplectization
of $(M,\xi)$, and the followup paper \cite{LisiVanhornWendl2} carries out
the program of using this technology to classify fillings of~$(M,\xi)$.
The present paper, however, does not rely on those results:
our proof of Theorem~\ref{thm:finCobordism} is comparatively low-tech.
Its main ingredients are the theory of closed $J$-holomorphic spheres
in the spirit of Gromov/McDuff (cf.~\cite{McDuff:rationalRuled}), and
a surgical operation on spinal open books that was introduced in
\cite{LisiVanhornWendl1}, known as \emph{spine removal surgery}.  This 
operation is a generalization of several previous symplectic cobordism
constructions that were inspired by the $2$-handle attachment in Eliashberg's 
symplectic capping argument for contact $3$-manifolds \cite{Eliashberg:cap}.

For the convenience of the reader, we recall from \cite{LisiVanhornWendl1}
the definition of a spinal open
book decomposition for a closed $3$-manifold.
The manifold $M$ is decomposed into two compact regions with
matching boundary, the \defin{spine} $M\spine$ and the
\defin{paper} $M\paper$.
These are equipped with fibrations 
$\pi\spine \colon M\spine \to \Sigma$ and 
$\pi\paper \colon M\paper \to S^1$, where $\Sigma$ is a (possibly disconnected)
oriented surface, each of whose connected components has non-empty boundary,
and the fibers of $\pi\spine$ are assumed connected.
The connected components of the fibers of $\pi\paper$ are called the
\defin{pages}: they also have nonempty boundaries, which are disjoint
unions of fibers of~$\pi\spine$. The spinal open book decomposition is \defin{planar} if the
pages are surfaces of genus 0.

The spinal open book decomposition is then the data
$$
\boldsymbol{\pi} := \Big(\pi\spine : M\spine \to \Sigma,\ 
\pi\paper : M\paper \to S^1\Big).
$$ 
A contact structure is supported by this spinal open book decomposition if
it admits a contact form that restricts to each fiber of $\pi\paper$
as a Liouville form and each fiber of $\pi\spine$ is a closed Reeb
orbit.

Our conditions give that 
the components of $M\spine$ are each $S^1$-bundles over compact oriented
surfaces $\Sigma_1,\ldots,\Sigma_N$ with nonempty boundary. We 
fix a trivialization of each so as to identify the spine with
$$
M\spine = \Sigma_1 \times S^1 \amalg \ldots \amalg \Sigma_N \times S^1.
$$
This choice of trivializations will be referred to in the following as a
\defin{framing} of the spinal open book, and several details will depend
on this choice, but the important point is that it can be fixed in advance,
with no knowledge of the fillings of $(M,\xi)$.

According to the results of \cite{LisiVanhornWendl1}, if $(M,\xi)$ is supported by a
planar spinal open book decomposition, it can only
be strongly fillable if $\boldsymbol{\pi}$ satisfies a condition known as
\emph{symmetry}, which implies that all pages have the same
topology and there exist numbers $k_i \in \NN$ for $i=1,\ldots,N$ such that exactly $k_i$ boundary
components of each page lie in $\Sigma_i \times S^1$.  
In the following, we will assume the spinal open book decomposition satisfies
the hypotheses of 
Theorem~\ref{thm:finCobordism}, so, in particular, we may assume that the spinal
open book decomposition is symmetric.

\begin{remark}
A result for planar spinal open books is also stated in \cite{Kaloti:Stein},
but its proof is framed in terms of Dehn twist factorizations, thus it needs to
assume that the fillings of $(M,\xi)$ are all characterized in terms of 
Lefschetz fibrations.  The latter
is not true for all spinal open books with planar pages, but only for a
special class, satisfying a technical condition known as 
\emph{Lefschetz-amenability} (see \cite{LisiVanhornWendl1}*{\S 1.1}).
More generally, a spinal open book may have the property that its
monodromy permutes boundary components of the pages, in which case
\cite{LisiVanhornWendl2} produces on any filling a foliation by 
$J$-holomorphic curves that can include finitely many
so-called \emph{exotic} fibers,
i.e.~singularities that are different from Lefschetz singular fibers.
The classification problem in these cases requires something more than
an understanding of positive factorizations in the mapping class group.
The following example exhibits a class of contact manifolds to which our
theorem applies, but whose fillings cannot generally be understood in terms
of Lefschetz fibrations with fixed boundary.
\end{remark}

\begin{example}
Suppose $B$ is a closed and connected (but not necessarily orientable)
surface, and $\Gamma \subset B$ is a nonempty multicurve such that
$B \setminus \Gamma$ is orientable.  We say that 
$\Gamma$ \defin{inverts orientations} if for every sufficiently small
open subset $\uU \subset B$ that is divided by $\Gamma$ into two
components $\uU_+$ and $\uU_-$, $\uU$ can be given an orientation that
matches that of $B \setminus \Gamma$ on $\uU_+$ and is the opposite
on~$\uU_-$.  Under this condition, a slight generalization of a well-known
construction of Lutz \cite{Lutz:77} (see \cite{LisiVanhornWendl1}*{\S 1.4}) 
assigns to any $S^1$-bundle $M \to B$ with oriented total space a 
canonical isotopy class of contact structures $\xi_\Gamma$, which are 
tangent to the fibers over $\Gamma$ and positively transverse to the
fibers in $B \setminus \Gamma$ (with respect to their orientations induced
by the orientations of $M$ and $B \setminus \Gamma$).
As shown in \cite{LisiVanhornWendl1}*{\S 1.4}, $(M,\xi_\Gamma)$ admits a
supporting spinal open book with a family of annular pages corresponding
to each component of~$\Gamma$, where the monodromy exchanges boundary
components of the annulus for each component of $\Gamma$ whose normal
bundle is nontrivial.  It is not hard to cook up concrete examples of this
phenomenon where $B$ is the Klein bottle, in which case $(M,\xi_\Gamma)$ can
also be described as a contact parabolic torus bundle (see \cite{LisiVanhornWendl2}).
Theorem~\ref{thm:finCobordism} applies to all contact manifolds of this
type, and thus gives bounds on the geography of their minimal symplectic
fillings.
\end{example}

\begin{figure}
\psfrag{M}{$M$}
\psfrag{W}{$W$}
\psfrag{X}{$X$}
\psfrag{Sigma1xD2}{$\Sigma_1 \times \DD^2$}
\psfrag{Sigma2xD2}{$\Sigma_2 \times \DD^2$}
\psfrag{Sigma1xS1}{$\Sigma_1 \times S^1$}
\psfrag{Sigma2xS1}{$\Sigma_2 \times S^1$}
\psfrag{C1}{$C_1$}
\psfrag{C2}{$C_2$}
\psfrag{pZ}{$\p Z$}
\psfrag{Z}{$Z$}
\includegraphics{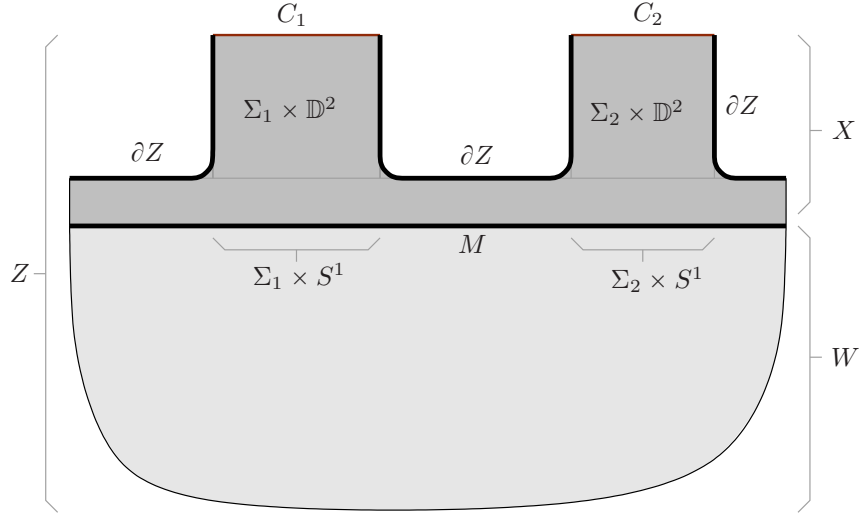}
\caption{\label{fig:spineRemoval}
The manifold $Z = W \cup_M X$ is constructed by stacking a spine removal cobordism $X$
on top of a given filling $W$ of~$M$, where $X$ is formed by attaching a
``handle'' $\Sigma_i \times \DD^2$ on top of the trivial cobordism
$[0,1] \times M$ along every spinal component 
$\Sigma_i \times S^1 \subset M$.  The picture is slightly misleading in that
the subsets $C_i$ for $i=1,2$ do not belong to~$\p Z$; rather, they are the \emph{interior}
codimension~$2$ symplectic submanifolds $\Sigma_i \times \{0\} \subset X$,
i.e.~the co-cores of the handles.}
\end{figure}

Before proving Theorem~\ref{thm:finCobordism} in detail, here is a sketch.
We construct the symplectic manifold $X$ as a spine removal
cobordism that caps every boundary component of the pages (see Figure~\ref{fig:spineRemoval}).  
Planarity implies that the rest of $\p X$ is then a disjoint
union of symplectic $S^2$-fibrations over $S^1$, so for a generic
choice of compatible almost complex structure $J$ making these $S^2$-fibers
complex, the usual holomorphic curve methods (as in 
\cite{McDuff:rationalRuled}) produce a symplectic Lefschetz fibration 
$$
Z := W \cup_M X \stackrel{\Pi}{\longrightarrow} \Sigma_0
$$ 
over some compact oriented surface $\Sigma_0$ with boundary,
where the regular fibers are $J$-holomorphic spheres and the
singular fibers each consist of a pair of transversely
intersecting $J$-holomorphic exceptional spheres.  The co-cores of the spine removal handles
$$
C_1 \cup \ldots \cup C_N \subset X \subset Z
$$
define multisections of this Lefschetz fibration with degrees determined by
the original spinal open book, and they are also symplectic
submanifolds, so they can be arranged to be $J$-holomorphic.  Now since
$\p \Sigma_0 \ne \emptyset$, blowing down an exceptional sphere in
each singular fiber produces a trivial fibration 
$$
\widecheck{Z} \cong \Sigma_0 \times S^2 \stackrel{\widecheck{\Pi}}{\longrightarrow} \Sigma_0,
$$
implying that the topology of $Z$ is
$(\Sigma_0 \times S^2) \# m\overline{\CC P}^2$, where
$m$ is the number of singular fibers.  In order to obtain bounds on
both $\Sigma_0$ and~$m$, we observe first that each
$\Pi|_{C_i} : C_i \to \Sigma_0$ is a branched
cover, so that the Riemann-Hurwitz formula implies a lower bound
on~$\chi(\Sigma_0)$.  Finally, we will find a bound on $m$ by blowing
down $m$ exceptional spheres and considering the resulting multisections
$$
\widecheck{C}_1,\ldots,\widecheck{C}_N \subset \Sigma_0 \times S^2.
$$
Since the
original filling was assumed to be minimal, each exceptional sphere in $Z$
must intersect at least one of the $C_i$'s, thus each blow-down operation
makes some positive contribution to the relative first Chern numbers of the normal
bundles of~$\widecheck{C}_i$.  But since the topology of $\Sigma_0 \times S^2$
is quite simple, we will also be able to show that these Chern numbers depend
only on the framed link $\coprod_{i=1}^N \p \widecheck{C}_i \subset \p \Sigma_0 \times S^2$, 
which (up to some finite ambiguity due to choices of trivialization)
again depends only on the original spinal open book.

The motivating idea in this argument is that the topology of the unknown
filling $W$ can be encoded in the arrangement of positively intersecting
multisections
$\widecheck{C}_1 \cup \ldots \cup \widecheck{C}_N \subset \Sigma_0 \times S^2$,
whose pattern of intersections depends on the intersections of the co-cores
$C_i \subset Z$ with the singular fibers.  Crucially, if we make the right
choices in blowing down~$Z$ to construct~$\widecheck{Z}$, then the
relative homology classes of the $\widecheck{C}_i$ are determined by their 
restrictions to the boundary, and are thus independent of the choice of filling.
Figures~\ref{fig:LefschetzBD1} and~\ref{fig:LefschetzBD2} show an example of how two
slightly different arrangements with the same boundary can correspond to
distinct minimal symplectic fillings of the same contact manifold.

Note that while it is convenient in our argument to assume the co-cores
$C_i \subset X$ are $J$-holomorphic, it is not essential---we use this
assumption mainly in order to ensure that their intersections are positive
and that they have well-defined blow-downs~$\widecheck{C}_i$, but we do not
need any Fredholm or compactness theory for these curves.  That is fortunate,
because in most cases, they live in moduli spaces of negative virtual dimension
and would thus disappear under any nontrivial deformation of~$J$.  There is
one exception: if the original spinal open book is an \emph{ordinary} 
supporting open book in the sense of Giroux \cite{Giroux:ICM}, then
each co-core $C_i$ is a disk that can be completed to a $J$-holomorphic plane 
having index~$0$ and an unobstructed deformation theory.  This provides a
reason to expect strictly more rigidity in the presence of planar open books,
suggesting in particular the following conjecture:

\begin{conju}
Every planar contact $3$-manifold has at most finitely many distinct
deformation classes of minimal symplectic fillings.
\end{conju}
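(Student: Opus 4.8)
The plan is to upgrade the \emph{topological} finiteness provided by Theorem~\ref{thm:finCobordism} to a finiteness statement for symplectic deformation classes, by retaining the symplectic — not merely smooth — information throughout the spine removal construction. Given a minimal strong filling $(W,\omega)$, one forms $Z = W \cup_M X$ exactly as in the proof of Theorem~\ref{thm:finCobordism} and fixes a generic $J$ making the $S^2$-fibres over $\p X$ holomorphic, producing the symplectic Lefschetz fibration $\Pi\colon Z \to \Sigma_0$ with sphere fibres together with the $J$-holomorphic multisections $C_1,\dots,C_N$. By the Corollary, the base $\Sigma_0$ lies in a fixed finite list and the number $m$ of singular fibres is uniformly bounded. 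Invoking the standard dictionary between symplectic Lefschetz fibrations and positive factorizations (after Gompf, together with Moser-type arguments pinning down the symplectic form from the fibration data), the tuple $(Z,\Pi,C_1,\dots,C_N)$ — and hence $(W,\omega)$, obtained by removing the \emph{fixed} piece $X$ — is determined up to symplectic deformation equivalence by a positive factorization of the total monodromy in the mapping class group of the punctured sphere that is the generic fibre, recorded together with the way the vanishing cycles meet the $C_i$ (equivalently, the permutation action on the punctures). The boundary behaviour here is fixed in advance by the framed spinal open book, so the only free data is this combinatorial package.

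It therefore remains to bound the number of such packages. The ambient data — fibre topology, base $\Sigma_0$, boundary monodromy — ranges over a finite set, and the length of the factorization is controlled by the bound on $m$ (equivalently on $\chi$). So the conjecture reduces to the following assertion about the mapping class group of a fixed planar surface $F$ with nonempty boundary: \emph{for each $\varphi \in \operatorname{MCG}(F)$ and each $L \in \NN$, there are only finitely many equivalence classes — under Hurwitz moves and simultaneous conjugation — of factorizations $\varphi = \tau_1\cdots\tau_L$ into positive Dehn twists.} This is the step I expect to be the genuine obstacle, and it is the reason the statement is only a conjecture: as soon as $F$ has at least four boundary components it contains infinitely many isotopy classes of simple closed curves, so a priori the twist generators $\tau_j$ are drawn from an infinite set, and one needs an \emph{a priori} bound on the geometric complexity of the vanishing cycles (say, their intersection numbers with a fixed cut system of arcs) in terms of $L$. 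Extracting such a bound is precisely the difficulty confronted by Plamenevskaya--Van Horn-Morris \cite{PlamenevskayaVanHorn} and Kaloti \cite{Kaloti:Stein}, whose methods yield the numerical bounds of the Corollary but not the sharper finiteness, and it is open in general.

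Two concrete lines of attack suggest themselves. The first is to run the holomorphic-curve side of the argument parametrically; this looks most promising for an \emph{ordinary} planar open book, where, as noted above, the co-core planes have index $0$ and unobstructed deformation theory, so that the moduli space of $J$-holomorphic planes through the binding furnishes a rigid normal form from which the factorization data ought to be readable with controlled complexity — a family of fillings would then give a family of such normal forms, and one would aim to show the discrete invariant extracted is both deformation-invariant and takes finitely many values. The second, more purely symplectic-topological, is to prove directly a McDuff-type uniqueness theorem: that a configuration of positively intersecting symplectic multisections $\widecheck{C}_1 \cup \dots \cup \widecheck{C}_N \subset \Sigma_0 \times S^2$ with prescribed homology classes and prescribed framed boundary link falls into only finitely many symplectic isotopy classes. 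Via the blow-up/blow-down correspondence already exploited in the proof of Theorem~\ref{thm:finCobordism}, either statement would pin down the deformation class of $W$ from data that is manifestly independent of the choice of filling.
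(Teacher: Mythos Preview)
The statement you are addressing is explicitly labeled a \emph{Conjecture} in the paper; there is no proof of it to compare against. The paper offers only heuristic motivation: for an ordinary planar open book the co-core disks complete to index~$0$ $J$-holomorphic planes with unobstructed deformation theory, suggesting extra rigidity beyond the topological bounds of Theorem~\ref{thm:finCobordism}.

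Your proposal is not a proof either, and to your credit you say so: you correctly isolate the missing ingredient as the assertion that, in the mapping class group of a fixed planar surface, a given element admits only finitely many Hurwitz-equivalence classes of positive factorizations of bounded length. This is open in general, and your reduction to it is accurate but does not constitute progress on the conjecture itself. The two ``lines of attack'' you sketch are reasonable speculation---the first in fact echoes the paper's own remark about the index-$0$ planes---but neither is carried out. So the honest summary is: the paper does not prove this, you do not prove this, and your write-up is a correct identification of where the difficulty lies rather than a proof proposal in any substantive sense.
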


\begin{figure}
\psfrag{C1}{$C_1$}
\psfrag{C2}{$C_2$}
\psfrag{C3}{$C_3$}
\psfrag{C4}{$C_4$}
\psfrag{C1check}{$\widecheck{C}_1$}
\psfrag{C2check}{$\widecheck{C}_2$}
\psfrag{C3check}{$\widecheck{C}_3$}
\psfrag{C4check}{$\widecheck{C}_4$}
\psfrag{Pi}{$\Pi$}
\psfrag{Picheck}{$\widecheck{\Pi}$}
\psfrag{D2}{$\DD^2$}
\psfrag{beta}{$\beta$}
\includegraphics[scale=0.8]{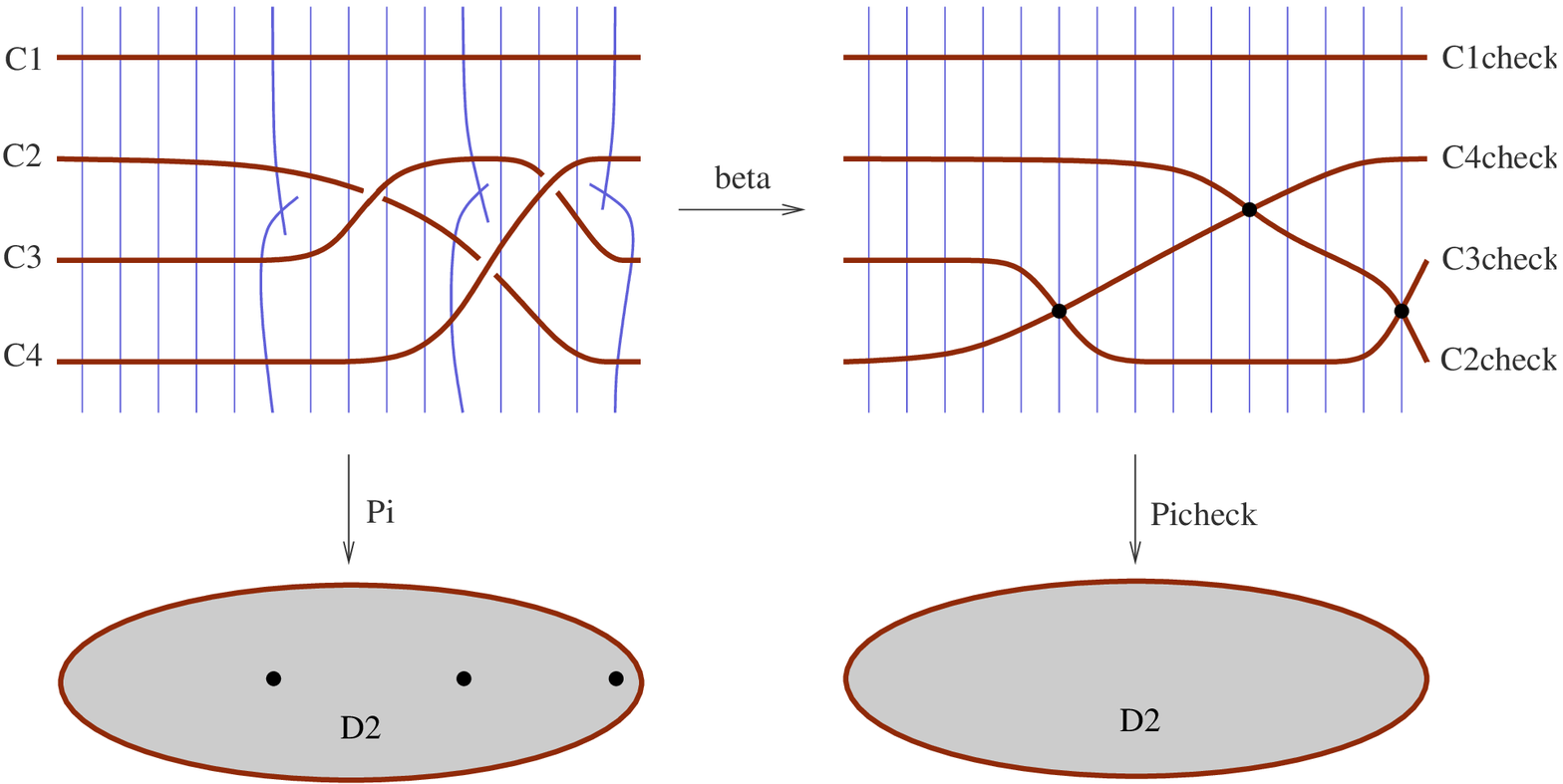}
\caption{\label{fig:LefschetzBD1}
An example with $\Sigma_0 = \DD^2$ for the Lefschetz fibration
$\Pi : Z \to \Sigma_0$ and its blowdown $\widecheck{\Pi} : \widecheck{Z} \to \Sigma_0$,
with four disjoint sections $C_i \subset Z$ and their (no longer disjoint) blowdowns
$\widecheck{C}_i \subset \widecheck{Z}$, defined by composing them with the
blowdown map $\beta : Z \to \widecheck{Z}$.  Here $\widecheck{Z}$ is obtained
from $Z$ by blowing down all exceptional spheres that are disjoint from~$C_1$.}
\ \\
\includegraphics[scale=0.8]{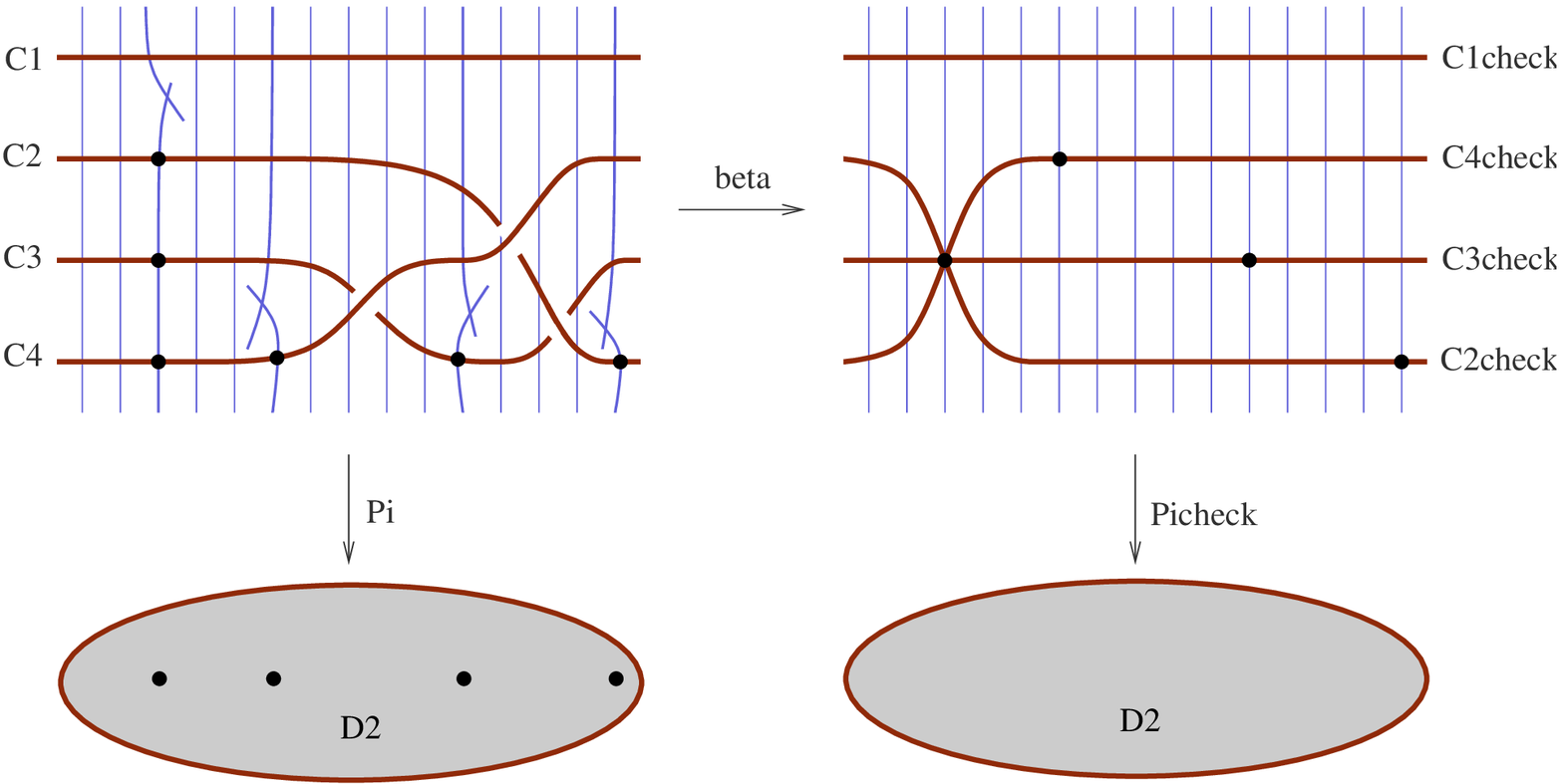}
\caption{\label{fig:LefschetzBD2}
Another example of the Lefschetz fibration on $Z = W \cup_M X$ and its blowdown,
but with $W$ chosen to be a different minimal filling of the same contact
manifold $M$ as in Figure~\ref{fig:LefschetzBD1}.  In particular, the blown-down sections
$\widecheck{C}_i \subset \widecheck{Z}$ are homologous to those in
Figure~\ref{fig:LefschetzBD1} and restrict to the same framed link at the boundary.
One can recover Lefschetz fibrations of both fillings by deleting neighborhoods 
of the sections $C_i \subset Z$ in both figures: in this
example, they are related to each other by the lantern relation, so the
two pictures represent the two Stein fillings of $L(4,1)$ described originally
by McDuff \cite{McDuff:rationalRuled}.}
\end{figure}

\subsection*{Acknowledgements}

This paper emerged as a side project from our long-running collaboration with
Jeremy Van Horn-Morris, and we are deeply indebted to him for many
enlightening discussions.  The second author would also like to thank
Tom Mark, who originally suggested the idea of using the symplectic co-cores
of spine removal handles to extract information about symplectic fillings.

\section{The proof}

\subsection{Spine removal}

Assume for the rest of this paper that $(M,\xi)$ satisfies the hypotheses
of Theorem~\ref{thm:finCobordism}. In particular, $\xi$ is supported by the
symmetric, planar spinal open book decomposition 
$$
\boldsymbol{\pi} := \Big(\pi\spine : M\spine \to \Sigma,\ 
\pi\paper : M\paper \to S^1\Big).
$$
Fix a framing of the spinal open book, which then identifies each component of
the spine with $\Sigma_i \times S^1$.  Since the spinal open book
decomposition is symmetric, there are integers $k_i$ that give the
incidence of any page with the spine component $\Sigma_i \times S^1$.

Denote by
$$
X \cong \left([0,1] \times M \right) \cup_{\{1\} \times M\spine} \coprod_{i=1}^N \Sigma_i \times \DD^2
$$
the cobordism obtained by performing spine removal surgery on every
component of $M\spine$, attaching $\Sigma_i \times \DD^2$ along its
boundary in the obvious way to each spinal component $\Sigma_i\times S^1 \subset \{1\} \times M$
and then smoothing the corners (see Figure~\ref{fig:spineRemoval}).  
According to \cite{LisiVanhornWendl1}, this cobordism carries
a symplectic structure $\omega_X$ such that we can write
$$
\p X = (- \p_- X) \amalg \p_+ X,
$$
where $\p_- X = M$ is concave with induced contact structure~$\xi$, and
$\p_+ X$ is a disjoint union of symplectic fibrations over~$S^1$, one for each connected
component of~$M\paper$, whose fibers are closed surfaces obtained by capping off all
boundary components of the pages with disks.  By the assumption that the spinal
open book is planar, these are spheres.  Since
$\Diff_+(S^2) \simeq \SO(3)$ is connected, these fibrations are all trivial
and are thus diffeomorphic to $S^1 \times S^2$, and each of the fibers has exactly $k_i$
transverse and positive intersections with the symplectic co-cores
$$
C_i := \Sigma_i \times \{0\} \subset \Sigma_i \times \DD^2 \subset X
$$
for $i=1,\ldots,N$.  The boundaries 
$$
\p C_i = \p\Sigma_i \times \{0\} \subset \p_+ X
$$ 
form a link transverse to the sphere fibers, which comes with a natural
framing dependent on our original choice of framing for~$\boldsymbol{\pi}$.

\subsection{Lefschetz fibration and multisections}

Now suppose $(W,\omega_W)$ is a minimal strong filling of $(M,\xi)$.
After a rescaling and deformation of $\omega_W$ near~$\p W$, we glue
symplectically to form an enlarged symplectic manifold
$$
(Z,\omega) := (W,\omega_W) \cup_M (X,\omega_X)
$$
with boundary $\p Z = \p_+ X$.  Choose an $\omega$-compatible almost complex
structure $J$ on $Z$ that makes both the co-cores $C_1,\ldots,C_N$ and
the $S^2$-fibers on $\p_+ X$ into $J$-holomorphic curves and is generic
everywhere else.  Then by standard arguments as in 
\cites{McDuff:rationalRuled,Wendl:rationalRuled},
the compactified moduli space of $J$-holomorphic spheres
homotopic to the fibers on $\p_+ X$ forms the fibers of a Lefschetz fibration
$$
\Pi : Z \to \Sigma_0,
$$
where $\Sigma_0$ is a compact oriented surface (homeomorphic
to the compactified moduli space) whose number of boundary components is
the number of connected components of~$M\paper$.  The singular fibers
of $\Pi : Z \to \Sigma_0$ each have two irreducible components, both 
exceptional spheres which intersect each other once transversely.
Let $[F] \in H_2(Z)$ denote the homology class of the fibers and
$[C_i] \in H_2(Z,\p C_i)$ the relative homology class of the co-core
$C_i \subset X \subset Z$ for each $i=1,\ldots,N$; the intersection products
$[F] \cdot [C_i] \in \ZZ$ are then well defined and satisfy
$$
[F] \cdot [C_i] = k_i.
$$
By positivity of intersections, it follows that 
$C_i$ intersects each fiber at most $k_i$ times, with equality for the fibers
to which it is transverse.  A standard genericity argument
(cf.~\cite{CieliebakMohnke:transversality}*{Prop.~9.1(b)}) implies:

\begin{lemma}
\label{lemma:genericIntersections}
The following holds for generic choices of $J$ satisfying the conditions 
described above.  All singular fibers of
$\Pi : Z \to \Sigma_0$ are transverse to each co-core $C_i$, with
intersections occurring only at regular points, while the 
collection of all regular fibers that intersect one of the co-cores
nontransversely is finite.  Moreover, every such fiber $F \subset Z$ has
at most one non-transverse intersection with any of the co-cores, and it
has local intersection index~$2$.  \qed
\end{lemma}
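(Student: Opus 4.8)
The plan is to run a standard Sard--Smale transversality argument on the universal moduli space of fiber-class spheres, in the spirit of \cite{CieliebakMohnke:transversality}*{Prop.~9.1(b)}; below I describe the reduction, the relevant dimension counts, and the one point that requires genuine care.

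The first step is to reduce every assertion to a statement about \emph{tangencies}. At any point $p$ lying on both a fiber $F$ of $\Pi$ and a co-core $C_i$, the tangent spaces $T_pF$ and $T_pC_i$ are $J$-complex lines in the $4$-dimensional real vector space $T_pZ$; since two distinct complex lines in $\CC^2$ meet only at the origin, the intersection $F\cap C_i$ at $p$ is automatically transverse unless $T_pF=T_pC_i$, i.e.~unless $F$ is tangent to $C_i$ at~$p$. Thus it suffices to establish, for generic admissible $J$: (a) no node of a singular fiber lies on any $C_i$, and no singular fiber is tangent to any $C_i$; (b) only finitely many regular fibers are tangent to a co-core; (c) each such tangency is simple, with local intersection index~$2$, corresponding to a nondegenerate critical point of $\Pi|_{C_i}$; and (d) no fiber is tangent to a co-core at two distinct points.

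Next I would carry out the usual bookkeeping. The compactified moduli space $\overline{\mM}$ of $J$-holomorphic spheres in the fiber class is, as an unparametrized space, homeomorphic to $\Sigma_0$ and hence $2$-dimensional, with the finitely many nodal configurations $E_1\cup E_2$ sitting over the interior critical values of~$\Pi$. Attaching one marked point yields a $4$-dimensional space $\overline{\mM}_1$ with an evaluation map to $Z$; the condition that the marked point lie on $C_i$ is of codimension~$2$, and its solution set is $C_i$ itself, viewed as a multisection of $\Pi$, while the additional tangency condition $T_pF=T_pC_i$ is a further codimension-$2$ condition on the $1$-jet of the curve at the marked point. A transverse cut by these conditions therefore leaves a $0$-dimensional set, hence finite by compactness of $\overline{\mM}_1$, whose generic local model is a critical point of $\Pi|_{C_i}$ of the form $z\mapsto z^2$; this gives (b) and~(c). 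The configurations excluded in (a) and~(d) all carry negative virtual dimension --- a marked point at a node, or a tangency occurring along a singular fiber, each lowers the virtual dimension by $2$ relative to a $0$-dimensional moduli space, while two tangency points on a single fiber amounts to cutting codimension~$4$ out of the $6$-dimensional space of (fiber, two marked points) --- so for generic $J$ these moduli spaces are empty, accounting for (a) and~(d). All of this is conditional on the relevant evaluation and $1$-jet-evaluation maps being made transverse over a residual set of admissible~$J$.

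The hard part, and really the only substantive point, is establishing that transversality, because the admissible almost complex structures are constrained: they must keep each $C_i$ and each sphere fiber over $\p_+X$ holomorphic, and the tangency conditions above are tested precisely at points of $C_i$, where $J$ is not free to vary. I would therefore verify that the space of admissible perturbations of $J$ is nonetheless large enough: at a point $p\in C_i$, once the induced complex structure on $T_pC_i$ is held fixed, the part of the $1$-jet of $J$ normal to $C_i$ can still be varied so as to rotate the fiber direction $T_pF$ to an arbitrary line through~$p$, while the lower-order (evaluation) transversality is obtained in the standard way by perturbing $J$ near injective points of the curve that lie off $C_i\cup\p_+X$ --- and such points are abundant, since $[F]\cdot[C_i]=k_i<\infty$ forces each fiber to meet $\bigcup_{i} C_i$ in only finitely many points. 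This is the same computation carried out in \cite{CieliebakMohnke:transversality}*{Prop.~9.1(b)}, and it adapts without essential change to the present constrained setting; granting it, the Sard--Smale theorem produces the required residual set of $J$ and the lemma follows.
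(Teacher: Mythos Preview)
Your proposal is correct and follows exactly the approach indicated by the paper, which supplies no proof beyond the citation to \cite{CieliebakMohnke:transversality}*{Prop.~9.1(b)}; you have fleshed out that standard Sard--Smale argument with the appropriate dimension counts and correctly isolated the one nontrivial point, namely that the constraint on $J$ along the co-cores still leaves enough freedom in the normal part of the $1$-jet of $J$ to make the jet-evaluation maps transverse. One small arithmetic slip: in your count for two simultaneous tangencies, the $6$-dimensional space of (fiber, two marked points) must be cut by codimension $4$ for ``both marked points on co-cores'' \emph{and} a further codimension $4$ for ``both tangent'', giving virtual dimension $-2$ rather than $6-4$; your conclusion that this stratum is generically empty is of course unaffected.
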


We will need a slightly more precise picture of these curves in certain
regions.

\begin{lemma}
\label{lemma:coordinates}
Suppose $E_1,\ldots,E_m \subset Z$ is a collection of pairwise disjoint
exceptional spheres that are each irreducible components of singular fibers
for~$\Pi$.  Then after smooth and $C^0$-small deformations of $J$, the 
Lefschetz fibration, and the
co-cores $C_1,\ldots,C_N$ such that the co-cores and fibers remain
$J$-holomorphic, the following can be assumed without loss of generality:
\begin{enumerate}
\item For each $i=1,\ldots,m$, $J$ is integrable on a neighborhood of~$E_i$,
which is biholomorphically equivalent to a neighborhood of the zero-section in the
tautological line bundle over~$\CP^1$.
\item For each $i=1,\ldots,N$ and each point $z_0 \in \Sigma_0$ such that
the fiber over $z_0$ has a tangential intersection with~$C_i$,
we can identify a neighborhood 
$\uU \subset \Sigma_0$ of $z_0$ with $\DD^2$ and identify
$\Pi^{-1}(\uU)$ with $\DD^2 \times S^2$ where $S^2 := \CC \cup \{\infty\}$,
such that on this neighborhood, $J = i \oplus i$, $\Pi(z,w) = z$, and
$(C_1 \cup \ldots \cup C_N) \cap \Pi^{-1}(\uU)$ is the disjoint union of one
surface of the form
$$
\left\{ (z^2,az) \in \DD^2 \times S^2 \ \big|\ z \in \DD^2 \right\}, \qquad a \in \CC \setminus \{0\}
$$
with a finite collection of other surfaces of the form
$$
\left\{ (z,b) \in \DD^2 \times S^2\ \big|\ z \in \DD^2 \right\} \qquad b \in \CC.
$$
\end{enumerate}
\end{lemma}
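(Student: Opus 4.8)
The plan is to establish the two assertions separately. By Lemma~\ref{lemma:genericIntersections} no singular fiber is tangent to a co-core, so one can choose pairwise disjoint small neighbourhoods of the exceptional spheres $E_1,\dots,E_m$ and of the finitely many fibers $\Pi^{-1}(z_0)$ having a tangency with some co-core, and treat each by a local normal-form argument supported on that neighbourhood alone. In each case one deforms $J$ on the neighbourhood, deforms the co-cores there if necessary, and recovers the Lefschetz fibration afterwards as the family of $\tilde J$-holomorphic spheres in the fiber class; by automatic transversality and positivity of intersections this remains a Lefschetz fibration with singular fibers of the same type, close to the original, for any $\tilde J$ making those spheres holomorphic. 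Since all deformations are supported in the interior, $\p C_i$ and the classes $[C_i]$ are unaffected, and the $C^0$-size of each deformation is controlled by shrinking the neighbourhood.

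For part~(1), fix $i$. By the standard symplectic normal form for an embedded $J$-holomorphic sphere of self-intersection $-1$ (cf.~\cite{McDuff:rationalRuled}), a thin tubular neighbourhood of $E_i$ is symplectomorphic --- by a map $\phi$ sending $E_i$ to the zero section --- to a neighbourhood of the zero section in the tautological line bundle $\oO(-1)\to\CP^1$. Pulling back the standard integrable complex structure $J_0$ gives an $\omega$-compatible almost complex structure $\phi^*J_0$ agreeing with $J$ along $E_i$, hence $C^0$-close to $J$ on a thin enough neighbourhood; interpolating between $J$ and $\phi^*J_0$ through $\omega$-compatible almost complex structures making $E_i$ holomorphic produces a $C^0$-small deformation $\tilde J$ equal to $\phi^*J_0$ on a slightly smaller neighbourhood $\nu'(E_i)$, which is property~(1). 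By Lemma~\ref{lemma:genericIntersections} any co-core $C_j$ meeting $E_i$ does so transversally at a regular point of both, so its relevant piece is an embedded $J$-holomorphic disk, which --- shrinking once more and fixing its boundary where $\tilde J=J$ --- the implicit function theorem (or, in the integrable model, a direct graph computation) replaces by a $C^0$-close $\tilde J$-holomorphic disk; over the intervening collar one patches $\tilde J$ back to $J$ while keeping these disk pieces holomorphic, by the elementary fact recalled at the end.

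For part~(2), fix such a point $z_0$ and let $F_0=\Pi^{-1}(z_0)$, an embedded $J$-holomorphic sphere of trivial normal bundle carried by the foliation by nearby fibers. The standard normal form for such spheres (cf.~\cite{Wendl:rationalRuled}) provides a small disk $\uU\ni z_0$ and a fiber-preserving diffeomorphism $\Phi\colon\Pi^{-1}(\uU)\to\DD^2\times S^2$ carrying $\Pi$ to the projection and $J$ to $i\oplus i$. Under $\Phi$ the co-core sheets over $\uU$ become finitely many pairwise disjoint embedded $i$-holomorphic curves; by Lemma~\ref{lemma:genericIntersections} exactly one, $T$, is tangent to a fiber (with local index $2$), hence a holomorphic double cover of $\DD^2$ branched once over $z_0$, which after a linear change of fiber coordinate has the form $z\mapsto(z^2,f(z))$ with $f(0)\neq0$; the others are graphs $\sigma_j\colon\DD^2\to S^2$ transverse to the fibers. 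Shrinking $\uU$, this configuration becomes $C^0$-close to the model one --- the parabola $\{(z^2,az)\}$ together with constant sections $\{(z,b_j)\}$, where $a=f(0)$ and $b_j=\sigma_j(z_0)$ --- and since both are disjoint unions of one branch disk and finitely many sections with the same combinatorics, a $C^0$-small fiber-preserving isotopy $\Psi$ of $\DD^2\times S^2$ takes the first onto the second. In the coordinates $\Psi\circ\Phi$ the (unmoved) co-cores are precisely the model curves, and setting $\tilde J:=(\Psi\circ\Phi)^*(i\oplus i)$ over $\uU$ --- which is $C^0$-close to $J=\Phi^*(i\oplus i)$ since $\Psi$ is $C^0$-close to the identity --- makes the co-cores and the fibers $\{z\}\times S^2$ simultaneously $\tilde J$-holomorphic; shrinking once more and patching $\tilde J$ back to $J$ on a collar, exactly as in part~(1), completes the proof.

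The only delicate point, shared by both parts, is the collar patch: one must interpolate between the locally normalized structure and the unchanged ambient $J$ across the boundary of the model region so that the result is smooth, $C^0$-close to $J$, and still makes every co-core piece holomorphic (whence, after recovering the fibration, every fiber is holomorphic as well). This rests on the elementary fact that any finite collection of pairwise disjoint embedded surfaces in a four-manifold can be made holomorphic for a single almost complex structure, and that this structure may be taken $C^0$-close to any prescribed one for which the surfaces are already nearly holomorphic --- proved fiberwise over a tubular neighbourhood by a partition of unity --- after which one joins the two endpoint structures by a short path of such structures over the collar. Everything else is the standard normal-form and genericity technology for pseudoholomorphic curves in dimension four (cf.~\cites{McDuff:rationalRuled,Wendl:rationalRuled}); the reason the local models are as simple as stated is precisely the genericity supplied by Lemma~\ref{lemma:genericIntersections}.
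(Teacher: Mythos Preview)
Your strategy---isolate each exceptional sphere and each tangential fiber in disjoint neighbourhoods, normalize locally, then patch and recover the fibration via automatic transversality---is the same as the paper's. The difference is tactical: in part~(2) the paper deforms the co-cores directly with an explicit cutoff, writing the tangent piece as $u(z)=(z^2+R(z),az)$ with $R(z)=o(|z|^2)$ and replacing it by $u_\epsilon(z)=(z^2+\beta(|z|/\epsilon)R(z),az)$, whereas you instead try to pull the model configuration back through an ambient fiber-preserving isotopy~$\Psi$.

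There is however a genuine gap in your part~(2). You assert that the ``standard normal form'' furnishes a diffeomorphism $\Phi$ carrying $J$ to $i\oplus i$ on all of $\Pi^{-1}(\uU)$, but no such result is available: the almost complex structure $J$ is not integrable near $F_0$ in general, so there is no biholomorphism to an open set in $\DD^2\times S^2$. The tubular neighbourhood theorem only gives $\Phi_*J=i\oplus i$ \emph{along} $F_0$. This matters for two reasons. First, you then call the co-core sheets ``$i$-holomorphic curves'' in the model and use this to analyse their local form, which is unjustified. Second, and more seriously, your conclusion that $\tilde J=(\Psi\circ\Phi)^*(i\oplus i)$ is $C^0$-close to $J$ rests on $\Phi^*(i\oplus i)$ already being close to~$J$, which fails away from~$F_0$. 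The fix is to do what the paper does: first make a genuine $C^0$-small deformation of $J$ so that it equals $i\oplus i$ on a neighbourhood of~$F_0$ (this is small precisely because they agree along~$F_0$), and only then work in the model. Once that is done your isotopy argument goes through, though you should say a word about why $\Psi$ can be chosen smooth across $z=0$, where the two fibrewise constraints $\Psi_{\zeta^2}(f(\pm\zeta))=\pm a\zeta$ merge into one. (Also: you want $f'(0)\ne 0$ and $a=f'(0)$, not $f(0)$.)
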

\begin{proof}
Two preliminary remarks: first, the $J$-holomorphic fibers of the
Lefschetz fibration will deform smoothly under any generic deformation of~$J$.
Indeed, deformations are unobstructed for all $J$ since the curves satisfy
the automatic transversality criterion of \cite{HoferLizanSikorav}, and for
index reasons, the only danger of new bubbling under these deformations
would come from index~$-1$ curves when $J$ becomes nongeneric, but closed
holomorphic curves can only have even index (cf.~\cite{Wendl:rationalRuled}*{Remark~2.20}).
Second, the deformation of $J$ does not need to be generic everywhere, as 
it suffices to have genericity only in some open region that intersects all
of the curves we are concerned about.

Now, denote by $\widetilde{\CC}^2 \to \CP^1$ the tautological line bundle,
which can be obtained by performing a complex blowup on $\CC^2$ at the origin,
so let $i$ denote its standard complex structure.
Since the co-cores $C_i$ intersect each of the exceptional spheres $E_j$
transversely, we can identify the complex normal bundle of $E_j$ with a
$J$-invariant subbundle of $TZ|_{E_j}$ that is tangent to each~$C_i$, and
feeding this into the tubular neighborhood theorem then identifies a
neighborhood of $E_j$ with a neighborhood of the zero-section $\CP^1 \subset \widetilde{\CC}^2$
such that $J$ matches $i$ along $E_j = \CP^1$ and each $C_i$ is tangent to
fibers of $\widetilde{\CC}^2 \to \CP^1$ at the zero-section.  We can then make a $C^1$-small
deformation of each $C_i$ near $E_j$ so that it matches fibers \emph{precisely}
in some smaller neighborhood of~$E_j$,
and a corresponding $C^0$-small deformation of $J$ to make it match $i$
in this smaller neighborhood; since the change in $C_i$ was $C^1$-small, we
can assume it is still symplectic and thus adjust $J$ correspondingly away
from $E_j$ to make sure that $C_i$ is $J$-holomorphic.  Notice that $E_j$
remains holomorphic throughout this deforation, which we can also assume is
supported in a tubular neighborhood of~$E_j$; then since $[E_j] \cdot [E_j] = -1$,
positivity of intersections implies that $E_j$ is always the only closed
holomorphic curve that is contained fully in this neighborhood, so that all
other fibers (or irreducible components of fibers) of $\Pi$ pass through
regions in which $J$ can still be assumed generic, hence they deform smoothly
as indicated in the previous paragraph.

The changes near $E_j$ can be assumed to have no effect on any fiber $F := \Pi^{-1}(z_0)$
that intersects some co-core $C_i$ non-transversely, since $F$ is a regular
fiber.  To understand the neighborhood of~$F$,
identify its (necessarily trivial) complex normal bundle with a $J$-invariant subbundle of $TZ|_F$
that is tangent to every co-core $C_i$ except at the unique point where one
of them is tangent to~$F$, and use this to identify the
neighborhood of $F$ with $\DD^2 \times S^2$ such that $F = \{0\} \times S^2$
and $J|_F = i \oplus i$.  One can also arrange this so that the surfaces
$\{z\} \times S^2$ are all fibers of~$\Pi$, and after suitably reparametrizing~$S^2$,
we can assume all transverse intersections of $C_1 \cup \ldots \cup C_N$ with $F$
occur at points $(0,b) \in \DD^2 \times S^2$ with $b \ne \infty$, while
the tangential intersection occurs at~$(0,0)$.  In light of our choice of
normal subbundle, the surfaces $C_j$ are tangent to surfaces of the form
$\DD^2 \times \{\operatorname{const}\}$ whenever they intersect $F$ transversely.
We can now modify $J$ near $F$ without changing it in directions tangent to
the fibers such that, after a $C^0$-small deformation, $J = i \oplus i$
near~$F$; after a similar $C^1$-small adjustment to the co-cores $C_j$ near
their tranverse intersections with~$F$, these will also have the form
$\DD^2 \times \{b\}$ near those intersections, and $J$ can still be adjusted
outside this small neighborhood to make the modified $C_j$ holomorphic.
Near the tangential intersection of $C_i$ and~$F$, Taylor's theorem and the
assumption that $C_i$ intersects $F$ with local index $2$ implies that $C_i$
can be parametrized by an embedded holomorphic curve of the form
$$
u : \DD^2 \hookrightarrow \DD^2 \times S^2 : z \mapsto
(z^2 + R(z), a z)
$$
for some $a \in \CC \setminus \{0\}$ and a smooth remainder function
$R(z) \in \CC$ satisfying $\lim_{z \to 0} \frac{R(z)}{|z|^2} = 0$.
Choosing $\epsilon > 0$ small and a smooth cutoff function
$\beta : [0,\infty) \to [0,1]$ with $\beta(s) = 1$ for $s \ge 1$ and $\beta(s)=0$
near $s=0$, we modify $u$ to an embedding of the form
$$
u_\epsilon(z) := (z^2 + \beta(|z|/\epsilon) R(z) , a z),
$$
and modify $C_i$ near $(0,0)$ by defining it as the image of~$u_\epsilon$.
Modifying $J$ to match $i \oplus i$ near~$F$ now makes  $u_\epsilon$ $J$-holomorphic
near the intersection, and for $\epsilon > 0$ sufficiently small it still
traces out a symplectic submanifold $C^1$-close to the original~$C_i$,
thus $J$ can be modified further outside the neighborhood of $(0,0)$ to make
the new $C_i$ everywhere $J$-holomorphic.  The latter modification can be
assumed to take place in a small neighborhood that contains no closed
holomorphic curves, thus we can assume all fibers passing through that
neighborhood also pass through regions where $J$ is generic, and they therefore
survive the deformation.
\end{proof}

\begin{cor}
\label{cor:isABranchedCover}
For each $i=1,\ldots,N$, the map 
$$
\varphi_i := \Pi|_{C_i} : C_i \to \Sigma_0
$$
is a
branched cover of degree~$k_i$.  Moreover, its branch points are all simple
(i.e.~of order~$2$), and for any $i,j = 1,\ldots,N$, 
any two distinct branch points of $\varphi_i : C_i \to \Sigma_0$ and
$\varphi_j : C_j \to \Sigma_0$ have distinct images in~$\Sigma_0$,
all of which are regular values of $\Pi : Z \to \Sigma_0$.  \qed
\end{cor}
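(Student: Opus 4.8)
The plan is to read the statement off directly from Lemmas~\ref{lemma:genericIntersections} and~\ref{lemma:coordinates}, together with the intersection count $[F]\cdot[C_i] = k_i$ and positivity of intersections already established above. First I would record the elementary structure of the map: $\varphi_i = \Pi|_{C_i}$ is a proper map of compact oriented surfaces with boundary which sends $\p C_i$ into $\p\Sigma_0$ and $\Int C_i$ into $\Int\Sigma_0$, since $C_i$ meets $\p_+ X = \p Z$ exactly along $\p C_i$; and it restricts on $\p C_i$ to an \emph{unbranched} $k_i$-fold cover of $\p\Sigma_0$, because each sphere fiber of $\p_+ X \to S^1$ meets $C_i$ transversally in $k_i$ points. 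So it remains to analyze $\varphi_i$ over $\Int\Sigma_0$.

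The heart of the argument is a pointwise case analysis showing that $\varphi_i$ is a local diffeomorphism at every $p\in\Int C_i$ except at the finitely many points where $C_i$ is tangent to a (necessarily regular) fiber. If $p$ lies on a regular fiber $F$ and the intersection is transverse, then $T_pC_i\cap\ker d\Pi_p = T_pC_i\cap T_pF = \{0\}$ for dimension reasons, so $d\Pi_p|_{T_pC_i}$ is an isomorphism. If $p$ lies on a singular fiber, then by Lemma~\ref{lemma:genericIntersections} $p$ is a regular point of $\Pi$ lying on exactly one exceptional component $E$, met transversally there by $C_i$, so again $T_pC_i\cap\ker d\Pi_p = T_pC_i\cap T_pE = \{0\}$. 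The only remaining case is a tangential intersection with a regular fiber; by Lemma~\ref{lemma:genericIntersections} there are finitely many of these, each of local index~$2$, and by Lemma~\ref{lemma:coordinates}(2) we may take coordinates in which $\Pi(z,w)=z$ and $C_i = \{(z^2,az) : z\in\DD^2\}$, so that $\varphi_i$ looks like $z\mapsto z^2$ near $p$: a simple branch point. Hence $\varphi_i$ is a branched cover with only simple branch points.

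For the degree I would pick $z_0\in\Int\Sigma_0$ whose fiber $F$ is regular and transverse to $C_i$ (all but finitely many interior fibers are of this kind) and use that $\#\varphi_i^{-1}(z_0) = \#(C_i\cap F) = [C_i]\cdot[F] = k_i$ by positivity, whence $\deg\varphi_i = k_i$. Finally, for the last assertion: every branch point of any $\varphi_j$ lies on a regular fiber of $\Pi$, since singular fibers are transverse to every co-core by Lemma~\ref{lemma:genericIntersections}, so its image is a regular value of $\Pi$; and since by Lemma~\ref{lemma:genericIntersections} no regular fiber has more than one non-transverse intersection with $C_1\cup\dots\cup C_N$, distinct branch points of the various $\varphi_j$ have distinct images in $\Sigma_0$.

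I expect the only slightly delicate point to be the case analysis in the second paragraph, where one must be sure that $\varphi_i$ fails to be a local diffeomorphism \emph{only} at the tangencies supplied by Lemma~\ref{lemma:genericIntersections}; this uses essentially that the lemma keeps the co-cores away from the critical points of $\Pi$ and transverse to the singular fibers, so that $\ker d\Pi_p$ along $C_i$ is always either the tangent space of a regular fiber or of a single exceptional component, never the degenerate kernel at a node. Everything else is a dimension count or a direct reading of the normal forms in Lemma~\ref{lemma:coordinates}.
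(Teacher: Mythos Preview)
Your proposal is correct and is exactly the argument the paper has in mind: the corollary carries a bare \qed because it is meant to be read off directly from Lemmas~\ref{lemma:genericIntersections} and~\ref{lemma:coordinates} together with $[F]\cdot[C_i]=k_i$ and positivity of intersections, and you have simply written out that reading in detail. The only thing worth noting is that your phrase ``no regular fiber has more than one non-transverse intersection with $C_1\cup\dots\cup C_N$'' is indeed the intended interpretation of Lemma~\ref{lemma:genericIntersections}, as confirmed by the normal form in Lemma~\ref{lemma:coordinates}(2), so the ``distinct images'' clause for $i\ne j$ is covered.
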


We shall refer to the surfaces $C_i \subset Z$ as \defin{multisections}
of $\Pi : Z \to \Sigma_0$ with degree~$k_i$.  The multisection $C_i$ is an honest
section if and only if $k_i=1$.  
Let $b_i$ be the number of branch points of the branched covering 
$\varphi_i \colon C_i \to \Sigma_0$ from Corollary~\ref{cor:isABranchedCover}. 
Then, the Riemann-Hurwitz formula gives
\[
    k_i \chi(\Sigma_0) = \chi(C_i) + b_i
\]
since each branch point is simple. We then have
\begin{equation}
\label{eqn:boundSigma0}
\chi(\Sigma_0) \ge \frac{1}{k_i} \chi(C_i)
\quad\text{ for each $i=1,\ldots,N$}.
\end{equation}
Recall that the number of boundary components of $\Sigma_0$ is given by the
number of connected components of the paper $M\paper$. 
This therefore gives an upper bound on the genus of $\Sigma_0$ that is determined
by the spinal open book $\boldsymbol{\pi}$ and hence is independent of the
choice of filling~$W$.  The topological type of $\Sigma_0$ thus belongs
to a finite list of possibilities determined by $(M,\xi)$ and~$\boldsymbol{\pi}$.

Let $m \ge 0$ denote the number of singular fibers in the
Lefschetz fibration $\Pi : Z \to \Sigma_0$.
Blowing down one exceptional sphere in each of these fibers then gives
a smooth $S^2$-fibration over $\Sigma_0$, which is necessarily
trivial since $\p\Sigma_0 \ne \emptyset$, hence
$$
Z \cong (\Sigma_0 \times S^2) \# m\overline{\CC P}^2.
$$
The remaining task is thus to establish an a priori upper bound on the
number of singular fibers~$m$.

\begin{lemma}
\label{lemma:notMinimal}
Every symplectic exceptional sphere in $Z$ intersects at least one of
the co-cores $C_1,\ldots,C_N$.
\end{lemma}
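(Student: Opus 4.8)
The plan is to argue by contradiction: suppose $E \subset Z$ is a symplectic exceptional sphere disjoint from all the co-cores $C_1,\ldots,C_N$, and show that this forces $W$ to be non-minimal, contradicting our standing hypothesis. First I would reduce to the case where $E$ is $J$-holomorphic. Since $E$ has $[E]\cdot[E]=-1$ and the singular fibers of $\Pi$ consist of pairs of exceptional spheres meeting once transversely, the class $[E]$ is represented by an exceptional sphere; by the uniqueness of exceptional spheres in a given homology class (McDuff), we may assume $E$ is one of the $J$-holomorphic exceptional spheres appearing as an irreducible component of a singular fiber of $\Pi$. (Alternatively, one can choose $J$ so that any prescribed exceptional class is $J$-holomorphic and rerun the Lefschetz fibration argument; the details in the excerpt already assume $C_i$ and the $S^2$-fibers are $J$-holomorphic, and Lemma~\ref{lemma:coordinates} lets us adjust near finitely many exceptional spheres.) So from now on $E = E_1$ is a $J$-holomorphic exceptional sphere, a component of some singular fiber $\Pi^{-1}(z_1)$.

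Next I would locate $E$ relative to the decomposition $Z = W \cup_M X$. The key point is that $E$ cannot be contained in the handle region $X$. Indeed, $X$ is built from $[0,1]\times M$ by attaching the handles $\Sigma_i \times \DD^2$, and inside each handle the only closed holomorphic curves available are the fibers $\{\text{pt}\}\times S^2$ of $\Pi$ and things meeting the co-core $C_i = \Sigma_i \times \{0\}$; more precisely, a homologically nontrivial closed surface in $\Sigma_i \times \DD^2$ must have nonzero intersection with $C_i$ by the product structure, since $H_2(\Sigma_i \times \DD^2) \to H_2(\Sigma_i \times \DD^2, \partial)$ pairs the only interesting class against $[C_i]$. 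An exceptional sphere disjoint from every $C_i$ and not equal to a fiber (fibers have square $0$, not $-1$) therefore cannot live in $\coprod_i \Sigma_i \times \DD^2$, and on the collar $[0,1]\times M$ there are no closed holomorphic curves at all for a suitable $J$ (the symplectization-type region). Hence $E$ must intersect the interior of $W$; in fact, being connected and closed, and since $\partial_- X = M$ is a (concave) contact-type hypersurface through which $E$ would have to pass, I would argue $E$ is entirely contained in the interior of $W$ — or, more carefully, that $[E]$ lies in the image of $H_2(W) \to H_2(Z)$ and is represented by a symplectic exceptional sphere inside $W$.

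To finish, I would show that a symplectic exceptional sphere $E \subset Z$ disjoint from $C_1 \cup \cdots \cup C_N$ can be pushed into $W$: since the complement of a neighborhood of $\bigcup_i C_i$ in the handle region $\Sigma_i \times \DD^2$ retracts into $M\paper \subset M$ without adding topology, and more usefully, the class $[E] \in H_2(Z)$ has zero intersection with every $C_i$ and with the fiber class $[F]$ (as $E$ is a component of a singular fiber and the other component accounts for all of $[F]\cdot[E_j]$ — one checks $[E]\cdot[F] = 0$ for a singular-fiber component). The point is that the homology of $X$ relative to $M$ is generated (rationally) by the classes $[C_i]$ together with fiber classes, so a class pairing trivially with all of these and supported away from the handles actually comes from $H_2(W;\mathbb{Z})$. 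Then Gromov compactness / the standard McDuff argument applied inside the filling $W$ (or simply the observation that $E$ can be assumed disjoint from $X$ after an isotopy, hence lies in $W$) produces a symplectic exceptional sphere in $(W,\omega_W)$, so $(W,\omega_W)$ is not minimal — contradiction.

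The main obstacle, and the step I would spend the most care on, is the middle one: rigorously confining $E$ to $W$ rather than merely knowing it is disjoint from the co-cores. The co-cores $C_i = \Sigma_i \times \{0\}$ are codimension-$2$ and do not separate $\Sigma_i \times \DD^2$, so "disjoint from the $C_i$" does not immediately give "disjoint from the handles." The right tool is a homological argument: show that an exceptional class $[E]$ with $[E]\cdot[C_i] = 0$ for all $i$ must be in the image of $H_2(W;\mathbb{Z}) \to H_2(Z;\mathbb{Z})$ — using the Mayer–Vietoris sequence for $Z = W \cup_M X$ and the explicit description $X \simeq ([0,1]\times M) \cup \coprod_i (\Sigma_i \times \DD^2)$, where one computes that $H_2(X, M;\mathbb{Z})$ is carried by the relative classes $[C_i]$ — and then invoke minimality of $W$ directly, since $W$ contains no symplectic exceptional sphere and the exceptional class in $Z$ restricts to one in $W$. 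Making the "carried by the $[C_i]$" computation precise (keeping track of $b_1(\Sigma_i)$ and the gluing along $M\spine = \coprod_i \Sigma_i \times S^1$) is the routine but essential calculation underlying the whole lemma.
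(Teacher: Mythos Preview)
Your overall strategy---contradiction via minimality of $W$---is correct, but you have missed the one-line observation that makes this lemma trivial and have instead outlined a much harder homological route with real gaps. The paper's proof is simply: if $E$ is disjoint from every $C_i$, then for sufficiently small tubular neighborhoods $\nu(C_i)$ we have $E \subset Z \setminus \bigcup_i \nu(C_i)$. Removing a neighborhood of the co-core $C_i = \Sigma_i \times \{0\}$ from the handle $\Sigma_i \times \DD^2$ leaves $\Sigma_i \times (\text{annulus})$, which is nothing but a collar of the attaching region $\Sigma_i \times S^1$. Thus $Z \setminus \bigcup_i \nu(C_i)$ is $W$ with a trivial collar attached, i.e.\ a symplectic filling deformation equivalent to $(W,\omega_W)$ that now contains the exceptional sphere~$E$---contradiction. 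No $J$-holomorphicity, no Mayer--Vietoris, no need to identify $[E]$ with a singular-fiber component.

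Your proposed route has genuine difficulties beyond mere length. The reduction ``$E$ is homologous to a component of a singular fiber'' is not justified: McDuff's uniqueness statements are for closed manifolds with generic~$J$, while here $Z$ has boundary and $J$ is constrained along the $C_i$ and the fibers. More seriously, even if your Mayer--Vietoris computation shows $[E] \in \operatorname{im}\big(H_2(W) \to H_2(Z)\big)$, that yields only a homology class in $W$, not a symplectic exceptional sphere there; closing that gap would require precisely the kind of analysis the paper's argument avoids. You actually gesture at the right idea in your parenthetical ``$E$ can be assumed disjoint from $X$ after an isotopy''---the justification for that \emph{is} the paper's proof, namely that excising the co-cores geometrically undoes the handle attachment.
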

\begin{proof}
If not, then one can remove tubular neighborhoods of the co-cores and
find a symplectic filling that is symplectic deformation equivalent to
$(W,\omega)$ but contains an exceptional sphere, contradicting the
assumption that $(W,\omega)$ is minimal.
\end{proof}

The argument for bounding $m$ is slightly more straightforward if we can assume
one of the numbers $k_i$ is~$1$, so let us consider the case $k_1 = 1$
before tackling the general situation.  

\subsection{The case \texorpdfstring{$k_1=1$}{k1 = 1}}

Under this assumption, $C_1$ is a section of
$\Pi : Z \to \Sigma_0$, so in particular, every singular fiber consists of
exactly one exceptional sphere that intersects $C_1$ once and one that is
disjoint from~$C_1$.  We shall denote the spheres of the latter type by
$$
E_1,\ldots,E_m \subset Z
$$
and note that they are in one-to-one correspondence with the singular
fibers of $\Pi : Z \to \Sigma_0$.  Applying Lemma~\ref{lemma:coordinates}
to deform $J$ so that it becomes integrable near $E_1 \cup \ldots \cup E_m$,
define $\widecheck{Z}$ to be the symplectic manifold obtained from $Z$ by performing
a blow-down operation on each $E_1,\ldots,E_m$.  This symplectic manifold
inherits a compatible almost complex structure
$\widecheck{J}$ such that the blowdown map
$$
\beta : (Z,J) \to (\widecheck{Z},\widecheck{J}),
$$
defined as the identity outside of $E_1 \cup \ldots \cup E_m$ but collapsing
each $E_i$ to a point, is pseudoholomorphic.
Since all singular fibers of $\Pi : Z \to \Sigma_0$ are in the interior,
the boundary of $\widecheck{Z}$ is still $\p Z$ and thus contains the same
framed link $\coprod_{i=1}^N \p C_i \subset \widecheck{Z}$.
The Lefschetz fibration $\Pi : Z \to \Sigma_0$ now becomes a smooth symplectic
$S^2$-fibration 
$$
\widecheck{\Pi} : \widecheck{Z} \to \Sigma_0
$$
with $\widecheck{J}$-holomorphic fibers, and
the co-cores
$C_i$ for $i=1,\ldots,N$ project through $\beta : Z \to \widecheck{Z}$ 
to immersed (but possibly non-injective) $\widecheck{J}$-holomorphic curves
$$
\widecheck{C}_i \looparrowright \widecheck{Z}
$$
with boundary $\p\widecheck{C}_i = \p C_i \subset \p\widecheck{Z}$.
These curves are immersed since
each $C_i$ is transverse to every singular fiber by 
Lemma~\ref{lemma:genericIntersections}, but they have transverse 
self-intersections whenever $C_i$ intersects one 
of the $E_j$ more than once.  They are again multisections of 
$\widecheck{\Pi} : \widecheck{Z} \to \Sigma_0$ with degree~$k_i$.  
Notice that since $C_i$ and $C_j$ might both intersect some~$E_k$,
$\widecheck{C}_i$ and 
$\widecheck{C}_j$ 
might intersect. The co-cores $C_i$ and $C_j$ intersect~$E_k$
in different points, however, so the intersection of 
$\widecheck{C}_i$ and 
$\widecheck{C}_j$ 
will be transverse and positive.

If $\tau$ denotes the framing of
$\p C_i \subset \p Z$ we fixed previously, then by construction,
$\tau$ extends to a global trivialization of the complex normal bundle 
$N_{C_i} \subset TZ|_{C_i}$ of $C_i$, so that its relative first Chern number
is $c_1^\tau(N_{C_i}) = 0$.  After blowing down, each of these Chern numbers
will in general be larger, as can be measured by counting the intersections
of $C_i$ with a small perturbation of itself: a
positive transverse intersection in the blowdown is forced wherever $C_i$ and its perturbation
pass through one of the~$E_j$, proving
\begin{equation}
\label{eqn:cocoreNormal}
c_1^\tau(N_{\widecheck{C}_i}) = \sum_{j=1}^m [C_i] \cdot [E_j] \ge 0.
\end{equation}
Since we chose 
$E_1,\ldots,E_m$ to be disjoint from~$C_1$,
we have $c_1^\tau(N_{\widecheck{C}_1}) = 0$.
Note that $\widecheck{C}_1$ also does not intersect any of
$\widecheck{C}_2,\ldots,\widecheck{C}_N$.  

We claim that for each $i=1,\ldots,N$, 
the left hand side of \eqref{eqn:cocoreNormal} satisfies a bound that is 
independent of the choice of minimal filling for~$(M,\xi)$.
To see this, note 
first that we can write
$$
c_1^\tau(N_{\widecheck{C}_i}) = c_1^\tau(\widecheck{C}_i) - \chi(C_i),
$$
where $c_1^\tau(\widecheck{C}_i)$ is an abbreviation for the relative first Chern number
of the complex vector bundle $T\widecheck{Z}$ pulled back along the
immersion $\widecheck{C}_i \looparrowright \widecheck{Z}$, relative to the
obvious trivialization induced by $\tau$ at the boundary.  In particular,
$\chi(C_i)$ is determined by the spinal open book, and
$c_1^\tau(\widecheck{C}_i)$ depends only on the relative homology class
$[\widecheck{C}_i] \in H_2(\widecheck{Z} , \p C_i)$.  

To understand the latter, observe first that
there are finitely many homotopy classes of trivializations of the
oriented $S^2$-bundle 
$$
\p Z = \p\widecheck{Z} \stackrel{\widecheck{\Pi}}{\longrightarrow} \p\Sigma_0,
$$
corresponding to choices of elements in 
$\pi_1(\Diff_+(S^2)) \cong \pi_1(\SO(3)) \cong \ZZ_2$ for each component
of~$\p\Sigma_0$.  Each of these trivializations identifies the framed link
$\coprod_{i=1}^N \p\widecheck{C}_i \subset \p\widecheck{Z} = \p Z$ with some
isotopy class of framed links in $\p\Sigma_0 \times S^2$ transverse to the
$S^2$-fibers, producing a finite list of such framed links that depends
only on the original framed spinal open book and not on the filling~$W$.

Note that $\Sigma_0$ is a surface with boundary and retracts to its $1$-skeleton. The fibration
$\widecheck{\Pi} : \widecheck{Z} \to \Sigma_0$ is thus globally trivializable.
After choosing such a trivialization, 
we may identify 
$$
\widecheck{Z} = \Sigma_0 \times S^2 \stackrel{\widecheck{\Pi}}{\longrightarrow}
\Sigma_0 : (z,w) \mapsto z,
$$
where the isotopy class of the framed link $\coprod_{i=1}^N \p\widecheck{C}_i \subset
\p\Sigma_0 \times S^2$ must belong to the aforementioned finite list, which is
independent of the filling. 

We now claim that the isotopy
class of this framed link uniquely determines the relative homology classes
$[\widecheck{C}_i]$ for $i=1,\ldots,N$.  Indeed, let $[\widecheck{C}_i] \cdot_\tau [\widecheck{C}_i] \in \ZZ$
denote the \defin{relative self-intersection number} of $[\widecheck{C}_i]$, defined
by counting the signed intersections of $\widecheck{C}_i$ with a small generic perturbation
of itself that is pushed in the direction of $\tau$ at the boundary
(cf.~\cites{Hutchings:index,Siefring:intersection}).  Then
$$
[\widecheck{C}_1] \cdot_\tau [\widecheck{C}_1] = [C_1] \cdot_\tau [C_1] = 0
$$
by construction.  If $C' \looparrowright \widecheck{Z}$ is then another
properly immersed surface with $\p C' = \p\widecheck{C}_1$, the relative
homology classes of $C'$ and $\widecheck{C}_1$ differ by some absolute
class in $H_2(\Sigma_0 \times S^2) \cong \ZZ$, which is generated by the
fiber class~$[F]$, thus $[C'] = [\widecheck{C}_1] + \ell [F]$ for some
$\ell \in \ZZ$, and we have
$$
[C'] \cdot_\tau [C'] = [\widecheck{C}_1] \cdot_\tau [\widecheck{C}_1] +
2 \ell [\widecheck{C}_1] \cdot [F] + \ell^2 [F] \cdot [F] = 2 \ell
$$
since $[\widecheck{C}_1] \cdot [F] = k_1 = 1$ and $[F] \cdot [F] = 0$.
It follows that $C'$ can only have relative self-intersection zero if it is
homologous to~$\widecheck{C}_1$. Thus 
$[\widecheck{C}_1] \in H_2(\Sigma_0 \times S^2,\p C_1)$ is uniquely determined.
For $\widecheck{C}_i$ with $i=2,\ldots,N$, the fact that 
$$
[\widecheck{C}_1] \cdot [\widecheck{C}_i] = 0
$$
and $[\widecheck{C}_1] \cdot [F] \ne 0$ similarly implies that 
$[\widecheck{C}_i] \in H_2(\Sigma_0 \times S^2,\p C_i)$ is uniquely determined.

From this, it follows that for $i=1,\ldots,N$, there exist integers $m_i \in \ZZ$
depending only on the framed spinal open book $\boldsymbol{\pi}$ such that
$$
\sum_{j=1}^m [C_i] \cdot [E_j] \le m_i.
$$
At the same time, Lemma~\ref{lemma:notMinimal} implies
$$
\sum_{i=1}^N [C_i] \cdot [E_j] \ge 1
$$
for each $j=1,\ldots,m$, thus
$$
m \le \sum_{j=1}^m \sum_{i=1}^N [C_i] \cdot [E_j] \le \sum_{i=1}^N m_i,
$$
and this bound depends only on~$\boldsymbol{\pi}$ with its chosen framing.  
Since $Z \cong (\Sigma_0 \times S^2) \# m\overline{\CC P}^2$,
this concludes the proof for the case $k_1 = 1$.

\subsection{The general case}

The above strategy fails if none of the $k_i = [C_i] \cdot [F]$ equal~$1$ since it could
then happen that for every singular fiber of $\Pi : Z \to \Sigma_0$,
both exceptional spheres intersect every co-core~$C_i$.  We can
deal with this by replacing $Z$ by a branched cover, defined essentially as
the pullback of the Lefschetz fibration $\Pi : Z \to \Sigma_0$ via the
map $\varphi_1 : C_1 \to \Sigma_0$.  

Let us begin with a
construction at the boundary that requires no knowledge of the filling. Recall
that $\p Z$ and $\p \Sigma_0$ depend only on the spinal open book decomposition
$\boldsymbol{\pi}$, and do not depend on the filling (though $Z$ and $\Sigma_0$ do). 
Since $\p C_1$ is transverse to the fibers of $\p Z \stackrel{\Pi}{\to} \p\Sigma_0$,
the map $\p C_1 \stackrel{\varphi_1}{\to} \p\Sigma_0$ is a smooth
$k_1$-fold covering map and
there exists a smooth closed $3$-manifold\footnote{The notation $\p Z'$ is
chosen because it will turn out to be the boundary of something constructed
further below, but the definition of $\p Z'$ itself does not require this
knowledge.}
$$
\p Z' := \left\{ (z,x) \in \p C_1 \times \p Z\ \Big|\ \varphi_1(z) = \Pi(x) \right\},
$$
which admits a smooth oriented $S^2$-fibration
$$
\Pi' : \p Z' \to \p C_1 : (z,x) \mapsto z
$$
and a $k_1$-fold covering map
$$
\Phi : \p Z' \to \p Z : (z,x) \mapsto x
$$
such that $\varphi_1 \circ \Pi' = \Pi \circ \Phi$.  The framed link
$\coprod_{i=1}^N \p C_i \subset \p Z$ then gives rise to a new framed link
consisting of the disjoint union of
$$
\p C_i' := \Phi^{-1}(\p C_i) \subset \p Z',\qquad i=1,\ldots,N.
$$
These links are similarly transverse to the $S^2$-fibers, and $\p C_1'$ contains a
distinguished component given by the \emph{tautological section}
$$
\p \sigma_1 := \{ (z,z) \in \p Z' \ |\ z \in \p C_1 \}
$$
which is a lift of $\p C_1 \subset \p Z$ to the cover.  All of this depends
only on the framed link $\coprod_{i=1}^N \p C_i$ in~$\p Z$, and thus on the
original framed spinal open book, but not on the filling~$W$.

Next we extend the covering map $\p Z' \to \p Z$ to a branched cover $Z' \to Z$.
The construction is completely analogous to that of the previous paragraph, 
except that it does depend on the filling: recall first that by
Corollary~\ref{cor:isABranchedCover}, the branch points of 
$\varphi_1 : C_1 \to \Sigma_0$ occur in regular fibers, implying that
$\varphi_1$ is transverse to~$\Pi$ and the set
$$
Z' := \left\{ (z,x) \in C_1 \times Z\ \Big|\ \varphi_1(z) = \Pi(x) \right\}
$$
is therefore a smooth compact $4$-manifold with boundary~$\p Z'$, while
$$
\Pi' : Z' \to C_1 : (z,x) \mapsto z
$$
is a Lefschetz fibration and
$$
\Phi : Z' \to Z : (z,x) \mapsto x
$$
is a $k_1$-fold branched cover satisfying $\varphi_1 \circ \Pi' = \Pi \circ \Phi$.  
The regular fibers of
$\Pi' : Z' \to C_1$ are again spheres, and $\Phi$ is locally $2$-to-$1$
near a branching locus that consists of a finite collection of regular fibers
corresponding to the branch points of~$\varphi_1$.
Since singular fibers are disjoint from the branching locus, 
$\Pi' : Z' \to C_1$ has exactly $k_1 m$ singular fibers.

For each $i=1,\ldots,N$, let
$$
C'_i := \Phi^{-1}(C_i) \subset Z'.
$$
For $i=2,\ldots,N$, $C'_i \subset Z'$ is a submanifold since 
(by Corollary~\ref{cor:isABranchedCover}) the branch points of
$\varphi_i : C_i \to \Sigma_0$ occur outside the branching locus of $\Phi$,
hence $\Phi$ is transverse to~$C_i$.  
The situation is
slightly different for $i=1$ since $\Phi$ is not transverse to~$C_1$, and
$\Phi^{-1}(C_1)$ is the set of all pairs $(z,x) \in C_1 \times C_1$ such
that $z$ and $x$ belong to the same fiber of~$\Pi$.  This contains the
tautological section of $\Pi' : Z' \to C_1$ defined by
$$
\sigma_1 := \{ (z,z) \in Z' \ |\ z \in C_1 \}.
$$

\begin{lemma}
\label{lemma:C1prime}
We have $C_1' = \sigma_1 \cup C_1''$, where
$C_1''$ is a (possibly disconnected) smooth submanifold of $Z'$ that has finitely many
intersections with $\sigma_1$, all transverse and positive, occurring
at each of the points $(z,z) \in \sigma_1$ for the branch points $z \in C_1$ of
$\varphi_1 : C_1 \to \Sigma_0$.  Moreover, the map
$$
\varphi_1' := \Pi'|_{C_1''} : C_1'' \to C_1
$$
is a branched cover of degree $k_1 - 1$.
\end{lemma}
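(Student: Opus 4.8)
The statement concerns the fiber product $C_1' = \Phi^{-1}(C_1) = \{(z,x) \in C_1 \times C_1 : \Pi(z) = \Pi(x)\}$, and the claim is that it decomposes as the tautological section $\sigma_1 = \{(z,z)\}$ together with a complementary piece $C_1''$ which is a branched cover of $C_1$ of degree $k_1 - 1$, meeting $\sigma_1$ transversely and positively exactly at the branch points of $\varphi_1$. First I would analyze $C_1'$ fiberwise over a point $z_0 \in C_1$: writing $w_0 := \varphi_1(z_0) \in \Sigma_0$, the fiber of $\Pi'|_{C_1'}$ over $z_0$ is the set of $x \in C_1$ with $\Pi(x) = w_0$, i.e.\ exactly $\varphi_1^{-1}(w_0) \subset C_1$, which has $k_1$ points (counted with multiplicity) since $\varphi_1$ has degree $k_1$. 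One of these points is $z_0$ itself (giving the tautological section), and the remaining $k_1 - 1$ points (with multiplicity) constitute the fiber of $C_1''$. So set-theoretically $C_1'' := C_1' \setminus \sigma_1$ (closure), and $\varphi_1' := \Pi'|_{C_1''}$ has degree $k_1 - 1$ over $C_1$, once we check it is an honest branched cover.

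**Key steps.** (1) \emph{Local model near a point of $\sigma_1$ lying over a branch point.} Using the coordinates from Lemma~\ref{lemma:coordinates}(2): near a branch point $z_0 \in C_1$ of $\varphi_1$, the co-core $C_1$ is locally $\{(z^2, az) : z \in \DD^2\} \subset \DD^2 \times S^2$, so in these coordinates $\varphi_1$ is $z \mapsto z^2$. Then $C_1'$ near $(z_0,z_0)$ is $\{(z,x) \in \DD^2 \times \DD^2 : z^2 = x^2\} = \{x = z\} \cup \{x = -z\}$. The branch $\{x = z\}$ is $\sigma_1$; the branch $\{x = -z\}$ is (a local piece of) $C_1''$, and the two meet transversely at the origin. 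This gives the assertion that $C_1'' $ is smooth near these points, that it meets $\sigma_1$ transversely there, and — since both branches are $J'$-holomorphic (pullbacks of $J$-holomorphic curves under the holomorphic branched cover $\Phi$, which we may take holomorphic in these coordinates) — the intersection is positive. (2) \emph{$C_1''$ is smooth away from $\sigma_1$.} Away from the branch points of $\varphi_1$, $\varphi_1$ is a local diffeomorphism, so $\Phi$ is a local diffeomorphism near $C_1$ away from the branching locus, hence $\Phi^{-1}(C_1) = C_1'$ is smooth there; subtracting off the smooth section $\sigma_1$ (which is disjoint from $C_1''$ away from the finitely many branch-point fibers, since over a non-branch point $w_0$ the preimage $\varphi_1^{-1}(w_0)$ consists of $k_1$ distinct points) leaves $C_1''$ smooth. (3) \emph{$\varphi_1' : C_1'' \to C_1$ is a branched cover of degree $k_1 - 1$.} Over a generic $z_0 \in C_1$ (whose image $w_0$ is a regular value of $\varphi_1$), the fiber has $k_1 - 1$ distinct points; branch points of $\varphi_1'$ occur precisely where two sheets of $\varphi_1^{-1}(w_0)$ other than the tautological one collide, i.e.\ over preimages under $\varphi_1$ of branch values of $\varphi_1$, and by Corollary~\ref{cor:isABranchedCover} these branch values are simple, so $\varphi_1'$ is a genuine (tame, simple) branched cover.

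**Main obstacle.** The only real subtlety is step (1)–(2): verifying that $C_1''$ is a \emph{smooth} submanifold globally, and in particular at the points $(z_0,z_0) \in \sigma_1$ over branch points of $\varphi_1$, where a priori $C_1'$ could fail to be a transverse union of two smooth sheets. This is exactly where one needs the normal-form Lemma~\ref{lemma:coordinates}: it guarantees that near every tangential intersection of a fiber with $C_1$ — equivalently, near every branch point of $\varphi_1$ — the co-core looks locally like $\{(z^2, az)\}$ in holomorphic coordinates, so that $\Phi$ is the explicit $2$-to-$1$ map $(z,w) \mapsto (z^2, w)$ and $C_1' = \Phi^{-1}(C_1)$ is the union of the two graphs $x = \pm z$, which is visibly a transverse union of two disks. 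One should also note that the branch points of $\varphi_1$ lie over regular values of $\Pi$ (Corollary~\ref{cor:isABranchedCover}), so no singular-fiber phenomena interfere with this local picture; and that the positivity of the $\sigma_1 \cap C_1''$ intersections follows because, having chosen $J$ (and hence the pulled-back $J'$) integrable in these coordinates, both sheets are honest complex curves meeting at a point with positive local intersection index. Once the local model is in hand the rest is bookkeeping with degrees and the Riemann–Hurwitz / covering-space count.
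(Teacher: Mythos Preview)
Your approach is the same as the paper's, and the argument is essentially correct. There is one small omission worth flagging.

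In step~(2) you argue that $C_1''$ is smooth away from $\sigma_1$ because $\Phi$ is a local diffeomorphism ``away from the branching locus.'' But the branching locus of $\Phi$ consists of the \emph{entire} fibers $(\Pi')^{-1}(z_0)$ over branch points $z_0\in C_1$ of $\varphi_1$, and when $k_1\ge 3$ such a fiber contains points $(z_0,x)$ with $x\ne z_0$ a \emph{regular} point of $\varphi_1$ lying in the same $\Pi$-fiber. At these points $\Phi$ is genuinely not a local diffeomorphism, so your argument does not apply, yet they lie in $C_1''\setminus\sigma_1$ and must still be shown to be smooth points. The paper covers this case by a direct transversality check of $Z'$ against $C_1\times C_1$ inside $C_1\times Z$: when $z$ is a branch point but $x$ is regular, one has $T_{(z,x)}Z' = T_zC_1\oplus\ker T_x\Pi$, and since $T_xC_1$ is transverse to $\ker T_x\Pi$ this suffices. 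Alternatively, and more in the spirit of your step~(1), you can simply extend the local model: Lemma~\ref{lemma:coordinates} says the \emph{other} sheets of $C_1$ near the branch fiber are of the form $\{(w,b)\}$ for constants $b$, and in your local trivialization $Z'\cong \DD^2\times S^2$ with $\Phi(\zeta,y)=(\zeta^2,y)$ one has $\Phi^{-1}(\{(w,b)\})=\{(\zeta,b)\}$, visibly smooth with $\varphi_1'$ a local diffeomorphism there. With this case added, your proof is complete and matches the paper's.
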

\begin{proof}
As a set, we define $C_1''$ to be the closure of $C_1' \setminus \sigma_1$:
$$
C_1'' := \overline{C_1' \setminus \sigma_1} \subset Z'.
$$
Recall that $Z'$ was defined as a smooth submanifold of $C_1 \times Z$, and $C_1'$ is the
intersection of this with the submanifold $C_1 \times C_1 \subset C_1 \times Z$.
We claim that for any $(z,x) \in Z' \cap (C_1 \times C_1)$ with the property
that $z$ and $x$ are not both branch points of $\varphi_1 : C_1 \to \Sigma_0$,
this intersection of submanifolds in $C_1 \times Z$ is transverse.
This is equivalent to the claim that the tangent space
$$
T_{(z,x)}Z' = \left\{ (X,Y) \in T_z C_1 \oplus T_x Z\ \big|\ 
T\varphi_1(X) - T\Pi(Y) = 0 \right\}
$$
contains elements of the form $(X,Y)$ for arbitrary vectors
$Y$ spanning some subspace of $T_x Z$ transverse to $T_x C_1$.  
If $z$ is a regular point of $\varphi_1$, then $T_z \varphi_1 :
T_z C_1 \to T_{\varphi_1(x)} \Sigma_0$ is an isomorphism, so
for any $Y \in T_x Z$ there is a unique $X \in T_z C_1$ satisfying 
$T\varphi_1(X) = T\Pi(Y)$, which gives $(X,Y) \in T_{(z,x)}Z'$.
The alternative is that $T_z \varphi_1$ vanishes but $x$ is a regular point
of~$\varphi_1$, so
$T_{(z,x)}Z' = T_z C_1 \oplus \ker T_x \Pi$, where $\ker T_x\Pi$
is the tangent space to the fiber at~$x$.  The regularity of $\varphi_1$ at
$x$ then implies that
$\ker T_x\Pi$ is transverse to $T_x C_1$ and thus proves the claim.

Observe that by Corollary~\ref{cor:isABranchedCover}, any
$(z,x) \in Z' \cap (C_1 \times C_1)$ with $z \ne x$ has the property that
$z$ and $x$ cannot both be branch points of~$\varphi_1$, since
$\varphi_1(z) = \varphi_1(x)$.  We have thus proved that the only
possible singularities of $C_1'$ are at points
$(z,z) \in \sigma_1$ such that $z$ is a branch point of~$\varphi_1$.

We now analyze a neighborhood of any such point, using the local 
model from Lemma~\ref{lemma:coordinates}. Locally, we can assume
$\Sigma_0 = \DD^2 \subset \CC$, $Z = \DD^2 \times S^2$, $\Pi(\zeta,w) = \zeta$
and $C_1 \subset Z$ is parametrized by an embedding of the form
\begin{equation}
\label{eqn:branchParam}
\DD^2 \hookrightarrow \DD^2 \times S^2 : \zeta \mapsto (\zeta^2,a \zeta)
\end{equation}
for some $a \in \CC \setminus \{0\}$.  Since there are no singular
fibers in this local picture, $\Pi' : Z' \to C_1$ is actually just the
pullback of $\Pi : Z \to \Sigma_0$ through $\varphi_1 : C_1 \to \Sigma_0$,
and the parametrization for $C_1$ above then provides a smooth trivialization
of~$\Pi'$ identifying $((\zeta^2,a \zeta),(\zeta^2,x)) \in C_1 \times Z$
with $(\zeta,x) \in \DD^2 \times S^2$.  Under this identification, we can write
$$
Z' = \DD^2 \times S^2,
$$
with $\Phi(\zeta,x) = (\zeta^2,x)$ and $\Pi'(\zeta,x) = (\zeta^2,a \zeta)$,
and the tautological section is now parametrized by
$$
\sigma_1 : \DD^2 \hookrightarrow \DD^2 \times S^1 : \zeta \mapsto
(\zeta,a \zeta).
$$
We then have
$$
C_1' = \left\{ (\zeta,x) \in \DD^2 \times S^2 \ \big|\ (\zeta^2,x) \in C_1 \right\} =
\left\{ (\pm \zeta,a \zeta) \in \DD^2 \times S^2 \ \big|\ \zeta \in \DD^2 \right\},
$$
which can be written as the union of two transversely and positively intersecting
submanifolds
$$
\{ (\zeta,a \zeta) \} \cup \{ (\zeta,-a \zeta) \},
$$
where the first is the tautological section~$\sigma_1$.
It thus follows that $C_1''$ is a smooth submanifold of~$Z'$.

Finally, the coordinate description above implies that the map 
$\varphi_1' : C_1'' \to C_1$ is a local diffeomorphism near $C_1'' \cap \sigma_1$,
and it is clearly also a local diffeomorphism near all
points of the form $(z,x) \in C_1''$ such that $z$ and $x$ are
regular points of~$\varphi_1$.  
If on the other hand $(z,x) \in C_1''$ 
where $x$ is a branch point of $\varphi_1$ and $z$ is not, then we can
choose coordinates as above so that locally $\Sigma_0 = \DD^2$ and
$Z = \DD^2 \times S^2$, with $\Pi(\zeta,w) = \zeta$ and a neighborhood of
$x$ in $C_1$ parametrized by a map in the form of \eqref{eqn:branchParam}
with $x = (0,0)$, while a neighborhood of $z$ in $C_1$ is a straightforward
section
$$
\DD^2 \hookrightarrow \DD^2 \times S^2 : \zeta \mapsto (\zeta,b)
$$
with $z = (0,b)$ for some constant $b \in \CC \setminus \{0\}$.
The neighborhood of $(z,x)$ in $C_1'$ is thus parametrized by the embedding
$$
h : \DD^2 \hookrightarrow C_1 \times \ZZ : \zeta \mapsto 
\left((\zeta^2,b),(\zeta^2,a \zeta)\right),
$$
which satisfies $\varphi_1'(h(z)) = (\zeta^2,b)$, showing that
$(z,x)$ is a simple branch point of~$\varphi_1'$.  The degree of
$\varphi_1'$ can be deduced by counting $\varphi_1^{-1}(z)$ for a generic
point $z \in C_1$: it is the set of all pairs
$(z,x)$ such that $x \in C_1$ as in the same fiber as~$z$ but is not equal
to it, so outside of the finitely many fibers that are not transverse 
to~$C_1$, the number of points in this set will be exactly $k_1-1$, and
they are all regular points by the discussion above.
\end{proof}

The next lemma follows from Lemma~\ref{lemma:notMinimal}:
\begin{lemma}
\label{lemma:stillNotMinimal}
For each singular fiber of $\Pi' : Z' \to C_1$, each of the two irreducible
components intersects $C'_1 \cup \ldots \cup C'_N$, and the intersections are transverse
and positive.\qed
\end{lemma}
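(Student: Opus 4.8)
The plan is to transport the conclusion of Lemma~\ref{lemma:notMinimal} upstairs through the branched cover $\Phi : Z' \to Z$. First I would record the structural fact already exploited in constructing $\Pi' : Z' \to C_1$: the branching locus of $\Phi$ consists of the finitely many regular fibers of $\Pi$ lying over branch points of $\varphi_1$, and by Corollary~\ref{cor:isABranchedCover} every such branch point has image a regular value of $\Pi$, so the branching locus is disjoint from all singular fibers of $\Pi$. Consequently, if $z_0 \in \Sigma_0$ is a critical value of $\Pi$, then $z_0$ is a regular value of $\varphi_1$, so $\varphi_1^{-1}(z_0) = \{z_1,\ldots,z_{k_1}\}$ consists of $k_1$ distinct points and $\Phi^{-1}\bigl(\Pi^{-1}(z_0)\bigr) = \coprod_{\ell=1}^{k_1} \{z_\ell\} \times \Pi^{-1}(z_0)$; each summand is the fiber $(\Pi')^{-1}(z_\ell)$ and is carried by $\Phi$ diffeomorphically onto $\Pi^{-1}(z_0)$. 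In particular, writing $\Pi^{-1}(z_0) = E \cup E'$ for its two exceptional-sphere components, the two irreducible components of the corresponding singular fiber of $\Pi'$ are $\{z_\ell\} \times E$ and $\{z_\ell\} \times E'$, each mapped diffeomorphically by $\Phi$ onto $E$, respectively $E'$.

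Next I would invoke Lemma~\ref{lemma:notMinimal}: both $E$ and $E'$ are symplectic exceptional spheres in $Z$, hence each meets at least one co-core $C_i$, transversely by Lemma~\ref{lemma:genericIntersections}. Fixing a component, say $\widetilde E := \{z_\ell\} \times E$, and a point $p \in E \cap C_i$, the point $(z_\ell,p)$ lies on $\widetilde E$ and satisfies $\Phi(z_\ell,p) = p \in C_i$, hence $(z_\ell,p) \in \Phi^{-1}(C_i) = C_i'$; here I use that $C_i'$ was defined to be exactly $\Phi^{-1}(C_i)$, which remains valid for $i=1$ where $C_1' = \sigma_1 \cup C_1''$ by Lemma~\ref{lemma:C1prime}. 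Thus $\widetilde E$ meets $C_1' \cup \ldots \cup C_N'$, and the same argument applies to $\{z_\ell\} \times E'$, proving the first assertion of the lemma.

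For transversality and positivity I would observe that, since $p$ lies off the branching locus of $\Phi$, the map $\Phi$ restricts near $(z_\ell,p)$ to an orientation-preserving diffeomorphism onto a neighborhood of $p$ in $Z$ (in the local models of Lemma~\ref{lemma:coordinates}, $\Phi$ has the form $(\zeta,x) \mapsto (\zeta^2,x)$ only near the branching fibers, and is an unramified covering elsewhere). Transversality of $\widetilde E$ and $C_i'$ at $(z_\ell,p)$ therefore follows immediately from transversality of $E$ and $C_i$ at $p$, and since the co-cores, fibers, and exceptional spheres are symplectic submanifolds and hence canonically co-oriented, with $\Phi$ preserving these co-orientations, the local intersection sign is preserved, so the intersection is positive.

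The one point that deserves care --- and is therefore the main, if mild, obstacle --- is legitimizing the local-diffeomorphism claim for $\Phi$ near the singular fibers of $\Pi$; this rests entirely on Corollary~\ref{cor:isABranchedCover}, which places the branch points of $\varphi_1$ over regular values of $\Pi$ and thus keeps the branching locus of $\Phi$ away from every singular fiber. Once that is in hand, the lemma is just a matter of pulling each intersection point supplied by Lemma~\ref{lemma:notMinimal} back under $\Phi$.
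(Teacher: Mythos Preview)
Your argument is correct and is precisely the unpacking of what the paper intends: the paper simply records that the lemma ``follows from Lemma~\ref{lemma:notMinimal}'' and ends with a \qed, and your proof spells out exactly this, pulling back the intersections supplied by Lemma~\ref{lemma:notMinimal} through the branched cover $\Phi$, which is an unramified local diffeomorphism near every singular fiber by Corollary~\ref{cor:isABranchedCover}. The only refinement worth noting is that positivity of the original intersections $E \cap C_i$ is most cleanly seen from the fact that both are $J$-holomorphic, rather than from symplectic co-orientations.
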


\begin{lemma}
Each of the surfaces $C_1'', C_2',\ldots,C_N' \subset Z'$ has topological
type belonging to a finite list of possibilities that depend on the spinal
open book $\boldsymbol{\pi}$ and its chosen framing, but not on the filling~$W$.
\end{lemma}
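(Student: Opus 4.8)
The plan is to compute $\chi(C_1''),\chi(C_2'),\ldots,\chi(C_N')$ up to a finite ambiguity, to note that the number of boundary components of each is already fixed by the framed spinal open book, and to observe that none of these surfaces has a closed component; together these force the topological type into a finite list. The one point genuinely requiring care is the branch-point count for $\varphi_1'$ together with the verification that all branchings in sight are simple, but both of these are already supplied by Lemma~\ref{lemma:C1prime} and Corollary~\ref{cor:isABranchedCover}.

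First I would record two facts depending only on $\boldsymbol{\pi}$. Since $C_i = \Sigma_i \times \{0\}$ is a copy of the connected surface-with-boundary $\Sigma_i$, the number $\chi(C_i) = \chi(\Sigma_i)$ is determined by $\boldsymbol{\pi}$. Moreover, applying Riemann--Hurwitz to $\varphi_i \colon C_i \to \Sigma_0$ gives $b_i = k_i \chi(\Sigma_0) - \chi(\Sigma_i)$, and since the topological type of $\Sigma_0$ --- in particular $\chi(\Sigma_0)$ --- already belongs to a finite list determined by $\boldsymbol{\pi}$, each $b_i$ takes only finitely many values.

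Next I would analyze the branched covers themselves. For $i = 2,\ldots,N$ the branch locus of $\Phi \colon Z' \to Z$ is $\Pi^{-1}$ of the $b_1$ branch values of $\varphi_1$, a union of $b_1$ regular fibers of $\Pi$ along which $\Phi$ is locally $2$-to-$1$; by Corollary~\ref{cor:isABranchedCover} the branch values of $\varphi_1$ are disjoint from those of $\varphi_i$, so each such fiber meets $C_i$ transversely in exactly $k_i$ regular points of $\varphi_i$. A local computation (using the local model for a simple branch point of $\varphi_1$ as in Corollary~\ref{cor:isABranchedCover} together with the fact that $C_i$ is a local section of $\Pi$ at each of these transverse intersections) shows that $\Phi|_{C_i'} \colon C_i' \to C_i$ is a $k_1$-fold branched cover with exactly $k_i b_1$ simple branch points, so $\chi(C_i') = k_1 \chi(\Sigma_i) - k_i b_1$, which lies in a finite set. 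For $C_1''$, Lemma~\ref{lemma:C1prime} gives that $\varphi_1' = \Pi'|_{C_1''} \colon C_1'' \to C_1$ is a branched cover of degree $k_1 - 1$, and its proof identifies the branch points as the pairs $(z,x) \in C_1''$ with $x$ one of the $b_1$ branch points of $\varphi_1$ and $z$ one of the $k_1 - 2$ regular points of $\varphi_1$ over the same value; all are simple, so there are exactly $(k_1 - 2)b_1$ of them, and Riemann--Hurwitz yields $\chi(C_1'') = (k_1 - 1)\chi(\Sigma_1) - (k_1 - 2)b_1$, again in a finite set.

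Finally I would treat the boundaries. The links $\partial C_i' = \Phi^{-1}(\partial C_i)$ and $\partial C_1'' = \partial C_1' \setminus \partial \sigma_1$ have already been seen to be isotopic to fixed framed links in $\partial Z'$ that depend only on the framed spinal open book, so in particular their numbers of components are fixed. Moreover each connected component of $C_i'$, resp.\ of $C_1''$, maps onto the connected surface-with-boundary $\Sigma_i$, resp.\ $\Sigma_1$, by a branched cover whose branch locus lies entirely over interior points (branch values of $\varphi_1$ are interior to $\Sigma_0$, since $\partial C_1$ is transverse to the boundary fibers); such a component therefore contains a non-branch point lying over the boundary, which is necessarily a boundary point, so none of $C_1'', C_2',\ldots,C_N'$ has a closed component. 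Consequently each has at most as many connected components as it has (fixed) boundary circles. A compact oriented surface with a prescribed number of boundary circles, a bounded number of components, and only finitely many possible values of $\chi$ realizes only finitely many topological types, and the lemma follows.
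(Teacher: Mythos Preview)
Your proof is correct and follows essentially the same approach as the paper: both compute $\chi(C_i')$ and $\chi(C_1'')$ via Riemann--Hurwitz applied to the branched covers $\Phi|_{C_i'}\colon C_i'\to C_i$ and $\varphi_1'\colon C_1''\to C_1$, with the same branch-point counts $k_i b_1$ and $(k_1-2)b_1$. Your additional paragraph fixing the boundary links and ruling out closed components makes the passage from ``bounded $\chi$'' to ``finitely many topological types'' more explicit than the paper does, which is a welcome bit of extra care.
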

\begin{proof}
For $i=2,\ldots,N$, $C'_i$ is a (possibly disconnected)
multisection of $\Pi' : Z' \to C_1$ with degree~$k_i$, and
$$
\psi_i := \Phi|_{C'_i} : C'_i \to C_i
$$
is a $k_1$-fold branched cover with exactly
$k_i$ simple branch points in every component of the branching locus
of~$\Phi$.
Then, using again the notation introduced following Corollary
\ref{cor:isABranchedCover}, where $b_i$ is the number of branch points of 
$\varphi_i = \Pi|_{C_i} \colon  C_i \to \Sigma_0$, this gives that 
$\psi_i \colon C'_i \to C_i$, is a $k_1$-fold branched cover with $k_ib_1$
simple branch points. 
Riemann-Hurwitz then implies
\[
    \chi(C'_i) = k_1 \chi( C_i) - k_i b_1.
\]
Recall from Corollary \ref{cor:isABranchedCover} and the subsequent discussion that Riemann-Hurwitz also
gives us
\[
    \chi(C_i) = k_i \chi(\Sigma_0) - b_i.
\]
It follows then 
\begin{align*}
    \chi(C'_i) &= k_1 \chi(C_i) - k_i b_1  \\
        &= k_1 \chi(C_i) - k_i ( k_1 \chi(\Sigma_0) - \chi(C_1) ) \\
        &= k_1 \chi(C_i) + k_i \chi(C_1) - k_1 k_i \chi(\Sigma_0).
\end{align*}
Recall that $\chi(\Sigma_0) \le 1$ since $\Sigma_0$ is connected (and has
boundary). This then gives a lower bound
\[
    k_1 \chi(C_i) + k_i \chi(C_1) - k_1 k_i \le \chi(C'_i),
\]
which only depends on the spinal open book decomposition. We also immediately
obtain the upper bound $\chi(C'_i) \le k_1 \chi(C_i)$, which also
depends only on the spinal open book decomposition. 

We now consider $C'_1 = \sigma_1 \cup C''_1$. 
Observe that the number of intersections of $\sigma_1$ with
$C''_1$ is precisely $b_1$, the number of branch points of $\varphi_1$. 
Applying the Riemann-Hurwitz formula and again using $\chi(\Sigma_0) \le 1$, 
we obtain the following bound that depends only on~$\boldsymbol{\pi}$:
\begin{equation}
\label{eqn:sigma1Int}
0 \le [\sigma_1] \cdot [C''_1] = -\chi(C_1) + k_1 \chi(\Sigma_0) \le
k_1 - \chi(C_1).
\end{equation}

The topologies of $\sigma_1$ and
$C''_1$ satisfy similar bounds in terms of~$\boldsymbol{\pi}$: $\sigma_1$
is diffeomorphic to $C_1$ since it is a section, 
and the topology of $C''_1$ can be bounded via the fact that
$\varphi_1' : C''_1 \to C_1$ is a $(k_1-1)$-fold branched cover
with $k_1 - 2$ simple branch points in each connected component of the
branching locus of~$\Phi$, i.e.~it has $(k_1-2)b_1$ branch points.
Again by Riemann-Hurwitz, 
$$-\chi(C''_1) + (k_1 - 1) \chi(C_1) = (k_1 - 2) b_1 = (k_1 -2) ( -\chi(C_1) + k_1 \chi(\Sigma_0) ).$$
Hence, 
$$
\chi(C''_1) = (2k_1 - 3) \chi(C_1) - k_1 (k_1 - 2) \chi(\Sigma_0).
$$
This is bounded below by $(2 k_1 - 3) \chi(C_1) - k_1 (k_1 - 2)$
since $\chi(\Sigma_0) \le 1$, and above by
$(2 k_1 - 3) \chi(C_1) - (k_1 - 2) \chi(C_1) = (k_1 - 1) \chi(C_1)$
due to \eqref{eqn:boundSigma0}.
\end{proof}

We now proceed by adapting the blow-down argument that worked in the case
$k_1 = 1$, but with $Z$ replaced by $Z'$ and the tautological
section $\sigma_1$ playing the role previously played by~$C_1$.
Let $\tau$ denote the framing of the link $\coprod_{i=1}^N \p C_i' \subset
\p Z'$ defined by lifting the framing of $\coprod_{i=1}^N \p C_i \subset \p Z$.
In light of the bounds we have already established on the topology of~$\Sigma_0$,
the next lemma implies both lower and upper bounds on the value of
$[\sigma_1] \cdot_\tau [\sigma_1]$.

\begin{lemma}
\label{lemma:sigma1Selfint}
$[\sigma_1] \cdot_\tau [\sigma_1] = \chi(C_1) - k_1 \chi(\Sigma_0)$.
\end{lemma}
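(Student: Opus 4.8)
The plan is to realise $[\sigma_1]\cdot_\tau[\sigma_1]$ as the relative first Chern number $c_1^\tau(N_{\sigma_1})$ of the normal bundle of the section $\sigma_1$ of $\Pi':Z'\to C_1$, and then to evaluate this Chern number by comparing $N_{\sigma_1}$ with the (trivial) normal bundle $N_{C_1}$ of the co-core $C_1\subset Z$ through the differential of the branched cover $\Phi$. Since $\sigma_1$ is an embedded section it is transverse to every fibre of $\Pi'$, so $N_{\sigma_1}$ is a complex line bundle, and pushing $\sigma_1$ off itself along a generic section of $N_{\sigma_1}$ that equals $\tau$ on the boundary gives $[\sigma_1]\cdot_\tau[\sigma_1]=c_1^\tau(N_{\sigma_1})$ in the usual way.

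To compare the two normal bundles, first equip $Z'$ with an almost complex structure $J'$ making $\Phi:(Z',J')\to(Z,J)$ pseudoholomorphic; this is possible because $\Phi$ is a local diffeomorphism away from the branching locus (where we set $J'=\Phi^*J$), while near the branch fibres it is modelled on the holomorphic map $(\zeta,x)\mapsto(\zeta^2,x)$ appearing in the proof of Lemma~\ref{lemma:C1prime}, on which one can take $J'=i\oplus i$. The restriction $p:=\Phi|_{\sigma_1}:\sigma_1\to C_1$ is an orientation-preserving diffeomorphism, and since $\Phi(\sigma_1)\subseteq C_1$ the differential of $\Phi$ descends to a $\CC$-linear bundle map $\overline{D\Phi}:N_{\sigma_1}\to p^*N_{C_1}$. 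Away from the $b_1$ points $(z,z)\in\sigma_1$ with $z$ a branch point of $\varphi_1=\Pi|_{C_1}$ this is an isomorphism, because there $\Phi$ is a local diffeomorphism; near such a point the coordinates of Lemma~\ref{lemma:C1prime} express $\sigma_1$ as $\{(\zeta,a\zeta)\}$, $C_1$ as $\{(\zeta^2,a\zeta)\}$ and $\Phi$ as $(\zeta,x)\mapsto(\zeta^2,x)$, and a short computation shows that in suitable holomorphic frames $\overline{D\Phi}$ is multiplication by a nonzero multiple of $\zeta$, i.e.\ it has a single transverse (hence positive) zero.

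Finally, near $\p\sigma_1$ there are no branch points, so $\overline{D\Phi}$ is an isomorphism there and, by the definition of $\tau$ on $\p C_1'$ as the lift of $\tau$ on $\p C_1$, it carries $\tau|_{\p\sigma_1}$ to $p^*(\tau|_{\p C_1})$. Regarding $\overline{D\Phi}$ as a section of the complex line bundle $\Hom(N_{\sigma_1},p^*N_{C_1})$ which is nonvanishing near the boundary and has $b_1$ simple zeros, its relative Euler number yields
$$
b_1 \;=\; -\,c_1^\tau(N_{\sigma_1}) + c_1^\tau(p^*N_{C_1}).
$$
Because $p$ is an orientation-preserving diffeomorphism matching the boundary framings, $c_1^\tau(p^*N_{C_1})=c_1^\tau(N_{C_1})$, and this vanishes since $\tau$ was by construction chosen to extend to a global trivialisation of the normal bundle of $C_1=\Sigma_1\times\{0\}$ inside the handle $\Sigma_1\times\DD^2$. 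Hence $c_1^\tau(N_{\sigma_1})=-b_1$, and combining this with the Riemann--Hurwitz identity $\chi(C_1)=k_1\chi(\Sigma_0)-b_1$ from Corollary~\ref{cor:isABranchedCover} gives $[\sigma_1]\cdot_\tau[\sigma_1]=-b_1=\chi(C_1)-k_1\chi(\Sigma_0)$.

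The technical heart of the argument is the middle paragraph: verifying that $\overline{D\Phi}$ vanishes to exactly first order, with the right sign, at each branch point, which forces one to use the explicit coordinate model of Lemma~\ref{lemma:C1prime}. The main points of care are that the holomorphic frames of $N_{\sigma_1}$ and $p^*N_{C_1}$ be chosen compatibly with the orientations and with the boundary framing $\tau$, and that the $b_1$ zeros occur precisely over the branch points of $\varphi_1$ and nowhere else along $\sigma_1$. One could alternatively avoid introducing $J'$ and instead compute the local mapping degree of $\overline{D\Phi}$ by hand in the model, but the pseudoholomorphicity makes the positivity of the zeros automatic.
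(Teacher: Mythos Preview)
Your argument is correct and reaches the same conclusion, but the route is genuinely different from the paper's. The paper proceeds by constructing an explicit perturbation: it pushes $C_1$ to a nearby surface $C_1^\epsilon$ in $Z$ (in the $\tau$-direction at the boundary, and in the fibre direction near each branch point via $(z^2,az)\mapsto(z^2,az+\epsilon)$), uses $[C_1]\cdot_\tau[C_1]=0$ to deduce that the intersections away from the branch points must total $-b_1$, and then lifts $C_1^\epsilon$ to a section $\sigma_1^\epsilon$ of $\Pi'$, observing that near the branching locus $\sigma_1$ and $\sigma_1^\epsilon$ are disjoint while every other intersection lifts bijectively. Your approach is the infinitesimal version of this: rather than building $\sigma_1^\epsilon$, you compare the normal bundles directly through the linearised map $\overline{D\Phi}:N_{\sigma_1}\to p^*N_{C_1}$, so that the branch points appear as the zeros of a section of $\Hom(N_{\sigma_1},p^*N_{C_1})$ instead of as the ``lost'' intersection points of an explicit perturbation. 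The paper's argument is perhaps more hands-on and avoids any discussion of bundle maps or relative Euler numbers, while yours packages the same local computation into a cleaner Chern-number identity $c_1^\tau(N_{\sigma_1})=c_1^\tau(N_{C_1})-b_1$; the local model from Lemma~\ref{lemma:C1prime} is doing the essential work in both.
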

\begin{proof}
We construct a perturbed section $\sigma_1^\epsilon \subset Z'$ as follows.
Each branch point of $\varphi_1 : C_1 \to \Sigma_0$ corresponds to a point
at which $C_1$ touches a fiber tangentially, and the Riemann-Hurwitz formula
dictates that the number of such points is $-\chi(C_1) + k_1 \chi(\Sigma_0)$.
Now consider a coordinate neighborhood of such a point as in 
Lemma~\ref{lemma:coordinates}: the local model near this intersection is
$$
C_1 = \left\{ (z^2,az)\ \in \DD^2 \times S^2\ \big|\ z \in \DD^2 \right\}
$$
for some $a \in \CC \setminus \{0\}$.  For $\epsilon > 0$ small, define the
perturbed surface
$$
C_1^\epsilon := \left\{ (z^2,az + \epsilon) \in \DD^2 \times S^2 \ \big|\ z \in \DD^2 \right\},
$$
which has a single transverse and positive intersection with $C_1$ in this
neighborhood.  
Now extend $C_1^\epsilon$ globally to a small perturbation of
$C_1$ that is pushed in the direction of the framing $\tau$ at the boundary,
and assume it is generic so that all other intersections between $C_1$ and
$C_1^\epsilon$ are transverse and occur in fibers that are transverse to~$C_1$.
Since $[C_1] \cdot_\tau [C_1] = 0$, the positive intersections we see in
the coordinate neighborhoods above must be canceled by further intersections,
the total signed count of which is therefore $\chi(C_1) - k_1 \chi(\Sigma_0)$.
But $C_1^\epsilon$ can also be lifted to a section $\sigma_1^\epsilon$
of $\Pi' : Z' \to C_1$ that will be disjoint from $\sigma_1$ near the
branching locus, while each of the other intersections of $C_1$ with
$C_1^\epsilon$ in $Z$ lifts to an intersection
of $\sigma_1$ with $\sigma_1^\epsilon$ in~$Z'$, giving
$$
[\sigma_1] \cdot_\tau [\sigma_1] = [\sigma_1] \cdot [\sigma_1^\epsilon] =
\chi(C_1) - k_1\chi(\Sigma_0).
$$
\end{proof}

The rest of the argument is completely analogous to the $k_1=1$ case, so
a sketch should now suffice.  Recall that the goal is to establish a
bound on the number $m \ge 0$ of singular fibers in $\Pi : Z \to \Sigma_0$.
Using the local model in Lemma~\ref{lemma:coordinates} to understand the branching
locus of $\Phi : Z' \to Z$, there is a well-defined almost complex
structure $J' := \Phi^*J$ on $Z'$ for which the multisections
$\sigma_1,C_1'',C_2',\ldots,C_N'$ are all $J'$-holomorphic.
Each of the singular fibers of $\Pi' : Z' \to C_1$
contains a unique exceptional sphere disjoint from~$\sigma_1$, 
so denote these by $E_1,\ldots,E_{k_1 m}$ and blow them down
to produce a smooth $S^2$-fibration
$$
\widecheck{\Pi}' : \widecheck{Z}' \to C_1
$$
with $\widecheck{J}'$-holomorphic fibers for some almost complex structure
such that the blowdown map $(Z',J') \to (\widecheck{Z}',\widecheck{J}')$
is pseudoholomorphic.  Composing the latter with our multisections in $Z'$ gives
rise to immersed $\widecheck{J}'$-holomorphic multisections
$$
\widecheck{\sigma}_1,\widecheck{C}''_1,\widecheck{C}'_2,\ldots,\widecheck{C}'_N 
\looparrowright \widecheck{Z}'.
$$
Trivializing the fibration $\widecheck{\Pi}' : \widecheck{Z}' \to C_1$
then identifies $\widecheck{Z}'$ with $C_1 \times S^2$ so that the framed
link $\coprod_{i=1}^N \p \widecheck{C}_i' \subset \p\widecheck{Z}'$ belongs
to one of a finite collection of isotopy classes of framed links in
$\p C_1 \times S^2$ that are independent of the choice of filling.
Using the established bounds on $[\widecheck{\sigma}_1] \cdot_\tau [\widecheck{\sigma}_1]
= [\sigma_1] \cdot_\tau [\sigma_1]$ and the fact that 
$[\widecheck{\sigma}_1] \cdot [F] = 1$ for the fiber class
$[F] \in H_2(C_1 \times S^2)$, this framed link determines 
$[\widecheck{\sigma}_1] \in H_2(C_1 \times S^2,\p\widecheck{\sigma}_1)$
up to a finite ambiguity.  Similarly, $\sigma_1$ has exactly
$-\chi(C_1) + k_1 \chi(\Sigma_0)$ positive and transverse intersections
with $C_1''$, which does not change after blowing down, nor does the fact
that $\sigma_1$ and $C_i'$ are disjoint for all $i=2,\ldots,N$, hence
these relations also determine the relative homology classes
$[\widecheck{C}_1'']$ and $[\widecheck{C}_i']$ for $i=2,\ldots,N$ up to
finite ambiguity.  These relative homology classes together with the
respective Euler characteristics determine the relative first Chern numbers
of the normal bundles of these surfaces, which are then used to bound the
numbers
$$
m_1 := \sum_{j=1}^{k_1 m} [C_1''] \cdot [E_j], \qquad\text{ and }\qquad
m_i := \sum_{j=1}^{k_1 m} [C_i'] \cdot [E_j] \text{ for $i=2,\ldots,N$}.
$$
By Lemma~\ref{lemma:stillNotMinimal}, we also have
$$
[C''_1] \cdot [E_j] + \sum_{i=2}^N [C'_i] \cdot [E_j] \ge 1
$$
for each $j=1,\ldots, k_1 m$, and thus conclude
$$
k_1 m \le \sum_{j=1}^{k_1 m} \left( [C''_1] \cdot [E_j] +
\sum_{i=2}^N [C'_i] \cdot [E_j] \right) 
= \sum_{i=1}^N m_i,
$$
giving an upper bound on $m$ that depends only on the framed
spinal open book~$\boldsymbol{\pi}$.
This concludes the proof in the general case.

\begin{bibdiv}
\begin{biblist}
\bibselect{wendlc}
\end{biblist}
\end{bibdiv}

\end{document}